\theoremstyle{plain}
\newtheorem{theorem}{Theorem}
\newtheorem{lemma}[theorem]{Lemma}
\newtheorem{proposition}[theorem]{Proposition}
\title{Small-time global approximate controllability of bilinear wave equations}
\begin{document}
\author{Eugenio Pozzoli\footnote{Dipartimento di Matematica, Università di Bari, I-70125 Bari, Italy. \emph{Email address}: eugenio.pozzoli@uniba.it}}
\maketitle
\abstract{
We consider a bilinear control problem for the wave equation on a torus of arbitrary dimension. We show that the system is globally approximately controllable in arbitrarily small times from a dense family of initial states. The control strategy is explicit, and based on a small-time limit of conjugated 
dynamics to move along non-directly accessible directions (a.k.a. Lie brackets of the generators). 
}\\ 

\textbf{Keywords:} Wave equation; bilinear systems; small-time approximate controllability; Lie brackets.
\section{Introduction}
\subsection{The model}
In this paper we study the following bilinear wave equation on a $d$-dimensional torus $\mathbb{T}^d$, $d\in \mathbb{N}$,
\begin{equation}\label{eq:wave}
\frac{\partial^2}{\partial t^2}w(x,t)=\Big(\Delta+\mu(x,t)\Big)w(x,t),\quad (x,t)\in\mathbb{T}^d\times\mathbb{R},
\end{equation}
where $(w(\cdot,t),\frac{\partial}{\partial t}w(\cdot,t))\in H^1\times L^2(\mathbb{T}^d,\mathbb{R})$ is the state of the system (describing the profile $w$ and the velocity $\frac{\partial}{\partial t} w$ of the wave), $\Delta=\sum_{i=1}^d\frac{\partial^2}{\partial x_i^2}$ is the Laplacian, and $\mu$ is a function which plays the role of the control. In particular, we control the system through \emph{low modes forcing}: this means that we assume that the control function $\mu$ can be written as
\begin{equation}\label{eq:low-modes}
\mu(x,t)=\sum_{j=0}^{2d}p_j(t)\mu_j(x),
\end{equation}
where $p=(p_0,\dots,p_{2d})$ are piecewise constant control laws that can be freely chosen, and the $\mu_j$ are fixed to be the first real Fourier modes of the system:
\begin{equation}\label{eq:cos-sin}
(\mu_0(x),\dots,\mu_{2d}(x)):=(1,\cos(e_1x),\sin(e_1x),\dots,\cos(e_dx),\sin(e_dx)),
\end{equation}
where $\{e_i,i=1,\dots,d\}\subset\mathbb{Z}^d$ is the standard basis of $\mathbb{R}^d$. The dependence of the state on the control is nonlinear, and hence \eqref{eq:wave} is a nonlinear control problem in infinite dimensions. 


\subsection{The main result}
Determining the minimal time needed for the global approximate controllability of bilinear PDEs is a fundamental problem, whose answer is known in very few cases (see, e.g., \cite{minimal-time-thomas,minimal-time-coron,minimal-time-approximate}). The scope of this paper is proving that, in the case of wave equations, this minimal time is zero. 

More precisely, our main result is the small-time global controllability of \eqref{eq:wave}, approximately in $H^1\times L^2(\mathbb{T}^d)$, from any nonzero initial state whose profile has a finite number of non-vanishing Fourier modes. 
\begin{theorem}\label{thm:main-result}
Consider an initial state $(0,0)\neq(w_0,\dot{w}_0)\in H^1\times L^2(\mathbb{T}^d)$ such that
\begin{equation}\label{eq:initial-condition1}
w_0\neq 0, \quad \langle w_0,e^{ikx}\rangle_{L^2}=0\, \text{ for all but a finite set of } k\in \mathbb{Z}^d,
\end{equation}
or
\begin{equation}\label{eq:initial-condition2}
w_0= 0,\dot{w}_0\neq0, \quad \langle \dot{w}_0,e^{ikx}\rangle_{L^2}=0\, \text{ for all but a finite set of } k\in \mathbb{Z}^d.
\end{equation}
Then, for any final state $(w_1,\dot{w}_1)\in H^1\times L^2(\mathbb{T}^d)$ and any error and time $\varepsilon,T>0$, there exists a piecewise constant control law $p:[0,T]\to \mathbb{R}^{2d+1}$ such that the solution $w$ of \eqref{eq:wave} associated with the control \eqref{eq:low-modes},\eqref{eq:cos-sin} and with the initial condition $\left(w(t=0),\frac{\partial}{\partial t}w(t=0)\right)=(w_0,\dot{w}_0)$ satisfies
 $$\left\|\left(w(\cdot,T),\frac{\partial}{\partial t}w(\cdot,T)\right)-(w_1,\dot{w}_1)\right\|_{H^1\times L^2(\mathbb{T}^d)}< \varepsilon. $$
\end{theorem}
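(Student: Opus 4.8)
The plan is to follow the standard Lie-bracket / geometric control approach adapted to the bilinear wave equation, exploiting that the control acts on the "potential" slot of a second-order equation. First I would rewrite \eqref{eq:wave} as a first-order system in the variable $W=(w,\dot w)$, so that $\dot W = A W + \sum_j p_j(t) B_j W$, where $A$ generates the free wave group on $H^1\times L^2$ and each $B_j$ is the bounded multiplication operator $W=(w,\dot w)\mapsto (0,\mu_j w)$. The directly accessible directions are the $B_j W$; the key observation is that the iterated (conjugated) dynamics $e^{-sA} B_j e^{sA}$ produce, in a small-$s$ limit, new directions corresponding to Lie brackets such as $[A,[A,B_j]]$, and — crucially — because $A$ mixes the two components while $B_j$ does not, the double bracket $[[A,B_j],A]$ reintroduces a multiplication operator acting on the first component, i.e. effectively lets us "turn on" $\cos(e_i x)$, $\sin(e_i x)$ directly on the profile $w$ rather than only on $\dot w$. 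Iterating products of such bracket-generated multipliers, and using that products and integer combinations of $\{1,\cos(e_i x),\sin(e_i x)\}$ generate (a dense subalgebra of, hence by Stone–Weierstrass all of) $C(\mathbb{T}^d)$, one gets approximate access to multiplication by an arbitrary real potential $V(x)$ acting on $w$.

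The second block of the argument is to use this rich family of attainable "instantaneous" motions to steer the state. Here I would invoke the small-time limit lemma (presumably proved earlier in the paper, of the form: if $\pm BW$ and the bracket directions are approximately attainable in arbitrarily small time, then so is the flow of the saturated system) to reduce Theorem~\ref{thm:main-result} to: the system $\dot W = A W + (0, V(x) w)$, with $V$ ranging over a dense subset of real continuous (or smooth) potentials and also $V$ replaced by the "frozen" dynamics $e^{tV}$-type maps, is globally approximately controllable in $H^1\times L^2$. At the level of the profile this is essentially a statement about the linear Schrödinger-type / wave semigroup with tunable potential: by alternately applying the free wave flow (which rotates a single Fourier mode pair $(w_k,\dot w_k)$) and multiplication operators that couple different Fourier modes, one realizes, on the finite-dimensional subspace spanned by the initially excited modes plus their "neighbors," a group action that is transitive up to closure — a Galerkin/finite-dimensional controllability argument, using that $\cos(e_i x)\cdot e^{ikx}=\tfrac12(e^{i(k+e_i)x}+e^{i(k-e_i)x})$ connects $k$ to all of $\mathbb{Z}^d$, so the reachable set of modes is all of $\mathbb{Z}^d$ and one can build up any target profile with finitely many modes, then pass to the dense limit.

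The key steps, in order: (1) reformulate as a bilinear first-order system with bounded control operators $B_j$ and identify $A$; (2) compute the relevant Lie brackets and show the double bracket with $A$ yields multiplication-on-the-profile operators, then close under products and integer linear combinations to get (a dense set of) arbitrary real potentials $V$ acting on $w$; (3) quote/apply the small-time conjugation limit lemma to promote these bracket directions to genuinely small-time-attainable motions; (4) prove finite-dimensional approximate controllability on the (growing) span of excited Fourier modes by combining the free wave rotation with potential-induced mode coupling, checking that the two assumptions \eqref{eq:initial-condition1}–\eqref{eq:initial-condition2} (nonzero profile, or zero profile but nonzero velocity, with finitely many modes) give a nonzero "seed" from which every mode is reachable — the case $w_0=0,\dot w_0\neq0$ needing the wave flow to first generate a nonzero profile before the multipliers become effective; (5) assemble: approximate the target by a finite-mode state, steer the seed to it in small time, and control the error via continuity of the flows in $H^1\times L^2$.

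I expect the main obstacle to be step (2)–(3): making the formal Lie-bracket computation rigorous in the infinite-dimensional setting — the brackets $[A,[A,B_j]]$ involve $A$ twice and $A$ is unbounded, so the conjugated operators $e^{-sA}B_j e^{sA}$ must be controlled on a suitable scale of spaces (e.g. using that $B_j$ maps $H^1\times L^2$ into itself and commutators with $A$ gain/lose one derivative in a structured way), and one must verify that the small-time limit genuinely produces the multiplication-on-profile direction with quantitative error estimates uniform enough to be fed into the controllability scheme. The finite-dimensional controllability (step 4) should be comparatively routine once the available directions are pinned down, but care is needed that all maneuvers are executed in total time $<T$, which is exactly where the "small-time" nature of the conjugation limit is essential.
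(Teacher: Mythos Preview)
Your central Lie-bracket computation is incorrect, and this derails the whole strategy. With $\mathcal{A}=\begin{pmatrix}0&I\\ \Delta&0\end{pmatrix}$ and $B_j=\mu_j\mathcal{B}=\begin{pmatrix}0&0\\ \mu_j&0\end{pmatrix}$ one has $[\mathcal{A},B_j]=\begin{pmatrix}\mu_j&0\\0&-\mu_j\end{pmatrix}$ and hence
\[
[[\mathcal{A},B_j],\mathcal{A}]=\begin{pmatrix}0&2\mu_j\\ -\mu_j\Delta-\Delta\mu_j&0\end{pmatrix},
\]
which is \emph{not} a multiplication operator on the profile: it contains $\Delta$ and is unbounded on $H^1\times L^2$. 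So step (2) does not produce the directions you claim, and your worry about step (3) (``$A$ appears twice, unbounded'') is in fact fatal rather than merely technical: there is no small-time conjugation limit in the paper that realizes this bracket.

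The paper avoids this by taking the \emph{other} double bracket, $[[\mathcal{A},B_j],B_j]=\begin{pmatrix}0&0\\ -2\mu_j^2&0\end{pmatrix}$, which is bounded and acts on the \emph{velocity} slot only. The conjugated-dynamics limit $e^{-\tau^{-1/2}\psi\mathcal{B}}e^{\tau\mathcal{A}}e^{\tau^{-1/2}\psi\mathcal{B}}\to e^{-\psi^2\mathcal{B}}$ is then rigorously proved (only one factor of the unbounded $\mathcal{A}$ enters), and a saturation argument on the map $\phi\mapsto\phi_0-\sum\phi_i^2$ upgrades $\psi\in\{\mu_0,\dots,\mu_{2d}\}$ to arbitrary $\phi\in L^2$. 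This yields small-time approximate reachability of $(w_0,\dot w_0+\phi w_0)$ for any $\phi$, i.e.\ full control of the velocity while freezing the profile (here the hypothesis that $w_0$ is a nonzero trigonometric polynomial is used, to divide by $w_0$ off a small set). The profile is then moved not by bracket-generated multipliers or Galerkin mode-coupling as you propose, but simply by the free flow: given finite-mode $w_0,w_1$, one solves an elementary moment problem to find a velocity $f$ such that the uncontrolled evolution from $(w_0,f)$ hits $w_1$ in an arbitrarily short time. Chaining ``adjust velocity $\to$ free drift $\to$ adjust velocity'' gives the small-time result; the exact-time statement then follows by parking at the equilibrium $(1,0)$. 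Your Galerkin/finite-dimensional transitivity step (4) is therefore neither needed nor the route taken.
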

Let us stress that the initial state $(0,0)$ is an equilibrium of system \eqref{eq:wave} regardless of the control choice, and hence system \eqref{eq:wave} cannot be steered anywhere starting from $(0,0)$. We also remark that assumptions \eqref{eq:initial-condition1} or \eqref{eq:initial-condition2} on the initial state are technical but their necessity is an open question.
 In view of the strictly positive minimal time needed for the local exact controllability of bilinear wave equations showed by Beauchard in \cite{beauchard-wave}, Theorem \ref{thm:main-result} may seem surprising and relies upon the approximate nature of the small-time controllability result.
\subsection{The technique}

It is convenient to recast \eqref{eq:wave}, \eqref{eq:low-modes} as a first order evolution equation in the state $W=(w,\frac{\partial}{\partial t}w)$: this gives the system
\begin{equation}\label{eq:bilinear}
\frac{\partial}{\partial t}W(x,t)=\left(\mathcal{A}+\sum_{j=0}^{2d}p_j(t)\mu_j(x)\mathcal{B}\right)W(x,t),\quad (x,t)\in\mathbb{T}^d\times \mathbb{R},
\end{equation}
where
\begin{equation}\label{eq:matrices}
\mathcal{A}=\begin{pmatrix}
0 & I\\
\Delta & 0
\end{pmatrix},\quad \mathcal{B}=\begin{pmatrix}
0 & 0\\
I & 0
\end{pmatrix}. 
\end{equation}


The proof of Theorem \ref{thm:main-result} is based on the following small-time limit of conjugated dynamics, holding for any initial condition $(w_0,\dot{w_0})\in H^1\times L^2(\mathbb{T}^d)$:
\begin{equation}\label{eq:wave-limit}
 \lim_{\tau \to 0}e^{-\tau^{-1/2}\mu_j\mathcal{B}}e^{\tau \mathcal{A}}e^{\tau^{-1/2}\mu_j\mathcal{B}}\begin{pmatrix} w_0\\
\dot{w}_0
\end{pmatrix}
=\begin{pmatrix}
w_0\\
\dot{w}_0-\mu_j^2w_0
\end{pmatrix}.
\end{equation}
This limit has been introduced by Duca and Nersesyan in \cite{duca-nersesyan} for controlling nonlinear Schr\"odinger equations. It can be thought as the following explicit control strategy: 
we first apply an impulsive control with amplitude $\tau^{-1/2}$ (which gives the evolution $e^{\tau^{-1/2}\mu_j\mathcal{B}}$), we then let the system evolve freely for a time interval of size $\tau$ (which gives the evolution $e^{\tau \mathcal{A}}$), and we finally apply again an impulsive control with amplitude $-\tau^{-1/2}$ (which gives the evolution $e^{-\tau^{-1/2}\mu_j\mathcal{B}}$). The control law associated to this strategy is depicted in Figure \ref{fig:opposite-kicks}.

 \begin{figure}[ht!]\begin{center}
\includegraphics[width=0.3\linewidth, draft = false]{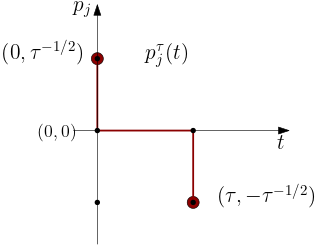}
\caption{\small The control law yielding the limiting propagator of \eqref{eq:wave-limit} can be thought as a fractional derivative of a Dirac delta. Indeed, it can be thought as the control law $p^\tau_j(t)=\frac{1}{\tau^{1/2}}(\delta(t)-\delta(t-\tau))$ with $\tau\to 0$, where $\delta(s)$ is a Dirac delta centred at $s=0$. } \label{fig:opposite-kicks} 
\end{center}
\end{figure}

As observed in the work of the author in collaboration with Chambrion \cite{small-time-molecule} on Schr\"odinger equations, the limiting dynamic \eqref{eq:wave-limit} (also in this case of wave equations) corresponds to the exponential of an iterated Lie bracket between $\mathcal{A}$ and $\mu_j\mathcal{B}$: more precisely, limit \eqref{eq:wave-limit} can also be written as
\begin{equation}\label{eq:abstract-limit}
 \lim_{\tau \to 0}e^{-\tau^{-1/2}\mu_j\mathcal{B}}e^{\tau \mathcal{A}}e^{\tau^{-1/2}\mu_j\mathcal{B}}\begin{pmatrix} w_0\\
\dot{w}_0
\end{pmatrix}=\exp\left(\frac{1}{2}[[\mathcal{A},\mu_j\mathcal{B}],\mu_j\mathcal{B}]\right)\begin{pmatrix} w_0\\
\dot{w}_0
\end{pmatrix},
\end{equation}
where 
$$  [[\mathcal{A},\mu_j\mathcal{B}],\mu_j\mathcal{B}]=\begin{pmatrix}
0 & 0\\
-2\mu_j^2 & 0
\end{pmatrix},$$
and the Lie bracket $[\mathcal{C},\mathcal{D}]$ of two linear operators $\mathcal{C},\mathcal{D}$ is formally defined as the commutator $\mathcal{C}\mathcal{D}-\mathcal{D}\mathcal{C}$. It is then interesting to interpret Theorem \ref{thm:main-result} as a consequence of a geometric control technique adapted to this infinite dimensional setting: as in finite-dimensional nonlinear control systems \cite{Lobry,sussmann-jurje}, one can think of the generators $\mathcal{A}$ and $\mu_j\mathcal{B}$ as directions that are directly accessible to the system, and recover from them additional directions that were not directly accessible, as for instance the Lie bracket $[[\mathcal{A},\mu_j\mathcal{B}],\mu_j\mathcal{B}]$. 
Moreover, the exponential flow computed on these directions (and applied to some initial condition) describes states that are approximately reachable in arbitrarily small times. 





\subsection{Related literature}
As shown in \cite{BMS,Chambrion-Caponigro-Boussaid-2020}, bilinear PDEs are never exactly controllable in the larger functional space where the evolution is defined. In particular, system \eqref{eq:wave} is not exactly controllable in $H^1\times L^2$ (we refer also to \cite{chambrion-laurent} for an analogous obstructions to the exact controllability of \eqref{eq:wave} even in the presence of a state nonlinearity). Researchers have then focus their efforts on the approximate controllability, or the exact controllability in smaller functional spaces, of bilinear PDEs.

The global approximate controllability in $H^1\times L^2$ of bilinear wave equations of the form \eqref{eq:wave} has been firstly proved by Ball, Marsden, and Slemrod in \cite{BMS}, on a 1-D interval with Dirichlet boundary conditions, with space-independent control function $\mu(x,t)=\mu(t)$, from any initial condition $(w_0,\dot{w}_0)$ whose Fourier modes are all non-vanishing, and in times $T\geq 1$. 


The main contributions of Theorem \ref{thm:main-result}, in this sense, are extensions of the global approximate controllability result in two ways: it is proved to hold in any space dimensions, and in arbitrarily small times. Moreover, the control technique behind the proof of Theorem \ref{thm:main-result} has the advantage of being explicit with the use of piecewise constant (in time) control laws; also, it suggests a useful link between controllability of bilinear PDEs and Lie brackets of the generators.


Approximate controllability of bilinear wave equations (on a 1-D interval with Dirichlet boundary conditions) has also been studied by Khapalov in \cite{khapalov-wave2}, where it is shown in $H^1\times L^2$, from any nonzero initial state, towards any state of the form $(w_1,0)$, in large times (and a similar result is obtained in \cite{khapalov-wave1} even in the presence of state nonlinearity).

The local exact controllability of bilinear wave equations (on a 1-D interval with Neumann boundary conditions) has been studied by Beauchard in \cite{beauchard-wave}: around the state $(w_0,\dot{w}_0)=(1,0)$, it is proved to hold (in the optimal space $H^3\times H^2$) if and only if the control time satisfies $T>2$ (similar conclusions are obtained in \cite{laurent} in the presence of state nonlinearity, and in the recent work \cite{urbani-wave} in the case of a degenerate Laplacian).

We conclude this bibliographical review by commenting on the geometric control technique of low mode forcing and the small-time controllability analysis of bilinear PDEs: the assumption on the spatial control to be supported only on a finite number of Fourier modes (cf. \eqref{eq:low-modes}) originates in the work of Agrachev and Sarychev \cite{navier-stokes,agrachev2} and Shirikyan \cite{Shirikyan1,Shirikyan2} on the additive control of Navier-Stokes equations. More recently, it has been introduced in the bilinear setting for studying small-time controllability properties of Schr\"odinger equations by Duca and Nersesyan \cite{duca-nersesyan,duca-nersesyan2} and by Coron, Xiang, and Zhang \cite{coron-small-semiclassical}. 

The results on bilinear wave equations obtained in this paper also testify about the versatility of the control strategy of small-time conjugated dynamics, readapted from bilinear Schr\"odinger equations \cite{duca-nersesyan,small-time-molecule}. In a forthcoming work \cite{duca-io-urbani}, we will show how this strategy also furnishes new results on the small-time approximate controllability of bilinear heat equations.

\subsection{Structure of the paper}
The paper is organized as follow: in Section \ref{sec:pre} we show a preliminary result on the free evolution; in Section \ref{sec:conj-dyn} we prove the small-time limit \eqref{eq:wave-limit} of conjugated dynamics; in Section \ref{sec:saturation} we show a density property of trigonometric functions; in Section \ref{sec:velocity} we prove that the velocity of the wave profile is globally approximately controllable in small times; finally, in Section \ref{sec:proof} we put things together and conclude the proof of Theorem \ref{thm:main-result}.

\section{Preliminaries}\label{sec:pre}
Let 
\begin{equation}\label{eq:eigenfunctions}
\varphi_k(x)=(2\pi)^{-d/2}e^{ikx},\quad\lambda_k=|k|^2,\quad k\in \mathbb{Z}^d,
\end{equation}
be the eigenfunctions and eigenvalues of $-\Delta$ on $\mathbb{T}^d$ which satisfy 
\begin{equation}\label{eq:spectral-dec}
-\Delta \phi_k=\lambda_k\phi_k,\quad \langle\varphi_k,\varphi_j\rangle=\begin{cases}
1,&j=k,\\
0,&j\neq k,
\end{cases} \quad k,j\in \mathbb{Z}^d,
\end{equation}
 where $\langle\cdot,\cdot\rangle$ denotes the scalar product of $L^2(\mathbb{T}^d)$. The domains of the linear operators $\Delta$ and $\mathcal{A}$ are respectively $H^2(\mathbb{T}^d) $ and $H^2\times H^1(\mathbb{T}^d)$, where
 $$H^k(\mathbb{T}^d)=\left\{\phi\in L^2(\mathbb{T}^d)\mid \sum_{j\in\mathbb{Z}}\lambda_j^{k/2}|\langle \phi,\varphi_j\rangle|^2<\infty\right\}.$$
Using the spectral decomposition \eqref{eq:spectral-dec} of $\Delta$, for any $(w_0,\dot{w}_0)\in H^1\times L^2(\mathbb{T}^d)$ and $t\in\mathbb{R}$, one can write $(w(t),\frac{\partial}{\partial t} w(t)):=e^{t\mathcal{A}}(w_0,\dot{w}_0)$ as a converging series in $H^1\times L^2(\mathbb{T}^d)$:
\begin{align}
w(t)&=\left(\langle w_0,\varphi_0\rangle+\langle \dot{w}_0,\varphi_0\rangle t\right)\varphi_0+\sum_{k\in \mathbb{Z}^d\setminus \{0\}} \left(\langle w_0,\varphi_k\rangle\cos(\sqrt{\lambda_k}t)+\langle \dot{w}_0,\varphi_k\rangle\frac{\sin(\sqrt{\lambda_k}t)}{\sqrt{\lambda_k}} \right)\varphi_k,\label{eq:free-dyn1} \\
\frac{\partial}{\partial t} w(t)&=\langle \dot{w}_0,\varphi_0\rangle\varphi_0+\sum_{k\in \mathbb{Z}^d\setminus \{0\}} \left(-\sqrt{\lambda_k}\langle w_0,\varphi_k\rangle\sin(\sqrt{\lambda_k}t)+\langle \dot{w}_0,\varphi_k\rangle\cos(\sqrt{\lambda_k}t) \right)\varphi_k. \label{eq:free-dyn2}
\end{align}
 We start by showing that the free evolution can be used to instantaneously change the initial profile into an arbitrary profile, if the initial velocity is arbitrary. 
\begin{proposition}\label{lem:moment-problem}
For any initial and final profile $w_0,w_1\in H^1(\mathbb{T}^d)$ such that
\begin{equation}\label{eq:finiteness}
\langle w_j,\varphi_k\rangle_{L^2}=0 \text{ for all but a finite set of } k\in \mathbb{Z}^d, \quad j=1,2 \end{equation}
 and any positive time $T>0$, there exist a smaller time $\tau\in[0,T)$ and an initial velocity $f\in L^2(\mathbb{T}^d)$ such that the solution $w$ of \eqref{eq:wave} associated with the identically zero control $p=0$ with initial condition $\left(w(t=0),\frac{\partial}{\partial t}w(t=0)\right)=(w_0,f)$ satisfies $w(\tau)=w_1$ in $H^1(\mathbb{T}^d)$.
\end{proposition}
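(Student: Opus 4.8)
\emph{Proof plan.} The plan is to observe that, thanks to the finiteness assumption \eqref{eq:finiteness}, hitting $w_1$ with the free flow \eqref{eq:free-dyn1} is in fact a \emph{finite-dimensional} moment problem — one scalar equation per active Fourier mode — and the only thing to arrange is that the propagation time $\tau$ be non-resonant with the finitely many frequencies involved.

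First I would set $S:=\{k\in\mathbb{Z}^d:\langle w_0,\varphi_k\rangle\neq0 \text{ or } \langle w_1,\varphi_k\rangle\neq0\}$, which is finite by \eqref{eq:finiteness}. Reading off \eqref{eq:free-dyn1} with $\dot w_0=f$, for $\left(w(\tau),\tfrac{\partial}{\partial t}w(\tau)\right)=e^{\tau\mathcal{A}}(w_0,f)$ one has $\langle w(\tau),\varphi_0\rangle=\langle w_0,\varphi_0\rangle+\tau\langle f,\varphi_0\rangle$ and, for $k\neq0$, $\langle w(\tau),\varphi_k\rangle=\cos(\sqrt{\lambda_k}\tau)\langle w_0,\varphi_k\rangle+\tfrac{\sin(\sqrt{\lambda_k}\tau)}{\sqrt{\lambda_k}}\langle f,\varphi_k\rangle$. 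Hence $w(\tau)=w_1$ in $H^1(\mathbb{T}^d)$ is equivalent to the system consisting of $\tau\langle f,\varphi_0\rangle=\langle w_1,\varphi_0\rangle-\langle w_0,\varphi_0\rangle$ for the zero mode, of $\sin(\sqrt{\lambda_k}\tau)\langle f,\varphi_k\rangle=\sqrt{\lambda_k}\bigl(\langle w_1,\varphi_k\rangle-\cos(\sqrt{\lambda_k}\tau)\langle w_0,\varphi_k\rangle\bigr)$ for $k\in S\setminus\{0\}$, and of no constraint at all for $k\notin S$ (both sides vanish identically there, so one puts $\langle f,\varphi_k\rangle=0$).

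Next I would choose the time. If $w_0=w_1$ one simply takes $\tau=0$, $f=0$; otherwise the set of resonant times $R:=\bigcup_{k\in S\setminus\{0\}}\tfrac{\pi}{\sqrt{\lambda_k}}\mathbb{Z}$ is a finite union of discrete sets, hence locally finite, so one may pick $\tau\in(0,T)\setminus R$. For such $\tau$ we have $\sin(\sqrt{\lambda_k}\tau)\neq0$ for every $k\in S\setminus\{0\}$ and $\tau\neq0$, so the displayed system uniquely determines the finitely many numbers $\langle f,\varphi_k\rangle$, $k\in S$. Taking $f$ to be the trigonometric polynomial with exactly these Fourier coefficients gives $f\in C^\infty(\mathbb{T}^d)\subset L^2(\mathbb{T}^d)$, and by construction $e^{\tau\mathcal{A}}(w_0,f)$ has first component $w_1$. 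To see that $f$ may be taken real (hence is an admissible initial velocity): since $w_0,w_1$ are real, $\langle w_j,\varphi_{-k}\rangle=\overline{\langle w_j,\varphi_k\rangle}$, and since $\lambda_{-k}=\lambda_k$ the equation indexed by $-k$ is the complex conjugate of the one indexed by $k$, so the solution automatically satisfies $\langle f,\varphi_{-k}\rangle=\overline{\langle f,\varphi_k\rangle}$.

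There is no genuine analytic obstacle here; the only subtlety — and the only place where \eqref{eq:finiteness} is used — is that the finiteness of $S$ makes the moment problem finite-dimensional and allows one to avoid all the resonances $\sin(\sqrt{\lambda_k}\tau)=0$ with a single choice of $\tau$. For a general profile with infinitely many active modes one would instead face an infinite moment problem together with a small-divisor difficulty as $\sqrt{\lambda_k}\tau$ approaches $\pi\mathbb{Z}$, which is why the hypothesis is imposed.
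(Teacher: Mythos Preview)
Your argument is correct and is essentially the same as the paper's: define the finite set of active modes, choose $\tau\in(0,T)$ avoiding the finitely many resonances $\sin(\sqrt{\lambda_k}\tau)=0$, and solve the resulting finite linear system for the Fourier coefficients of $f$. You add two small remarks the paper omits (the trivial case $w_0=w_1$ and the reality of $f$ via conjugate symmetry), but the strategy is identical.
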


\begin{proof}
We define
\begin{align*}
f_\tau&=\left(\frac{\langle w_1,\varphi_0\rangle-\langle w_0,\varphi_0\rangle}{\tau}\right)\varphi_0\\
&+\sum_{k\in\mathbb{Z}^d\setminus\{0\}}\left( \frac{\langle w_1,\varphi_k\rangle}{\sin(\sqrt{\lambda_k}\tau)/\sqrt{\lambda_k}}-\frac{\langle w_0,\varphi_k\rangle}{\sin(\sqrt{\lambda_k}\tau)/\sqrt{\lambda_k}}\cos(\sqrt{\lambda_k}\tau) \right)\varphi_k.
\end{align*}
Notice that we can choose $\tau<T$ such that $f_\tau$ is well-defined: indeed, $f$ is a sum on a finite subset of $\mathbb{Z}^d$ (cf. \eqref{eq:finiteness}), and denoting the latter as
 $$\mathcal{K}:=\{k\in\mathbb{Z}^d\mid \langle w_0,\varphi_k\rangle\neq 0,\text{ or } \langle w_1,\varphi_k\rangle\neq 0\},$$ it suffices to choose $\tau<T$ such that 
$$\tau\neq \frac{n \pi}{|k|},\quad \forall n\in\mathbb{N}, k\in\mathcal{K}.  $$
In this way $f=f_\tau$ is well-defined, belongs to $L^2(\mathbb{T}^d)$, and thanks to \eqref{eq:free-dyn1} one easily checks that $w(\tau)=\sum_{k\in \mathbb{Z}^d}\langle w_1,\varphi_k\rangle\varphi_k=w_1$.
\end{proof}


We now recall the well-posedness of \eqref{eq:bilinear},\eqref{eq:cos-sin} (for a proof, see e.g. \cite[Proposition 2]{beauchard-wave}).

\begin{proposition}
Given $T>0$, $p\in L^1([0,T],\mathbb{R}^{2d+1})$, and $W_0=(w_0,\dot{w}_0)\in H^1\times L^2(\mathbb{T}^d)$, there exists a unique weak solution $\mathcal{R}(t,W_0,p)$ of \eqref{eq:bilinear}, that is, a unique function $\mathcal{R}(\cdot,W_0,p)\in C^0([0,T],H^1\times L^2(\mathbb{T}^d))$ satisfying the following equality in $H^1\times L^2(\mathbb{T}^d)$
\begin{equation}\label{eq:weak-solution}
\mathcal{R}(t,W_0,p)=e^{t\mathcal{A}}W_0+\int_{0}^te^{(t-s)\mathcal{A}}\left(\sum_{j=0}^{2d}p_j(s)\mu_j(x)\mathcal{B}\right)\mathcal{R}(s,W_0,p)ds. 
\end{equation}

Moreover, there exists $C=C(\|p\|_{L^1},T)>0$ such that for any other $W_1=(w_1,\dot{w}_1)\in H^1\times L^2(\mathbb{T}^d)$ the following holds
\begin{equation}\label{eq:continuity}
\|\mathcal{R}(\cdot,W_0,p)-\mathcal{R}(\cdot,W_1,p)\|_{C^0([0,T],H^1\times L^2(\mathbb{T}^d))}\leq C\|W_0-W_1\|_{H^1\times L^2(\mathbb{T}^d)}.
\end{equation}
\end{proposition}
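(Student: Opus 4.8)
The plan is to solve the Duhamel (variation-of-constants) equation \eqref{eq:weak-solution} by a Picard fixed-point argument in the Banach space $C^0([0,T],H^1\times L^2(\mathbb{T}^d))$, after recording two facts about the ingredients: that $\mathcal{A}$ generates a strongly continuous group $\{e^{t\mathcal{A}}\}_{t\in\mathbb{R}}$ which is uniformly bounded on $[0,T]$, and that each $\mu_j\mathcal{B}$ is a bounded operator on $H^1\times L^2(\mathbb{T}^d)$. For the first fact, the explicit series \eqref{eq:free-dyn1}--\eqref{eq:free-dyn2} shows that the wave energy $\|\nabla w(t)\|_{L^2}^2+\|\partial_t w(t)\|_{L^2}^2$ is conserved while the zero Fourier mode $\langle w(t),\varphi_0\rangle$ grows at most linearly in $t$; hence $M_T:=\sup_{|t|\le T}\|e^{t\mathcal{A}}\|_{\mathcal{L}(H^1\times L^2)}<\infty$, and strong continuity follows by dominated convergence applied term-by-term to \eqref{eq:free-dyn1}--\eqref{eq:free-dyn2}. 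For the second, since $\mathcal{B}(w,\dot w)=(0,w)$ with $w\in H^1(\mathbb{T}^d)\subset L^2(\mathbb{T}^d)$ and each $\mu_j$ is smooth (so multiplication by $\mu_j$ is bounded on $L^2$), one gets $\mu_j\mathcal{B}\in\mathcal{L}(H^1\times L^2)$ and a constant $\Lambda>0$ with $\big\|\sum_{j=0}^{2d}p_j(s)\mu_j\mathcal{B}V\big\|_{H^1\times L^2}\le\Lambda\,|p(s)|\,\|V\|_{H^1\times L^2}$, where $|p(s)|:=\sum_j|p_j(s)|$ is comparable to the Euclidean norm of $p(s)$.

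Next I would check that, for $V\in C^0([0,T],H^1\times L^2)$, the integrand $s\mapsto e^{(t-s)\mathcal{A}}\big(\sum_j p_j(s)\mu_j\mathcal{B}\big)V(s)$ is Bochner integrable on $[0,t]$: it is strongly measurable (composition of the strongly continuous group with a measurable $L^1$-valued map) and dominated by $M_T\Lambda|p(s)|\,\|V\|_{C^0}\in L^1([0,T])$. Hence the map $\Phi$ defined by the right-hand side of \eqref{eq:weak-solution} sends $C^0([0,T],H^1\times L^2)$ into itself, continuity in $t$ of $\Phi V$ following from strong continuity of the group together with absolute continuity of $s\mapsto\int_0^s|p|$.

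Then the fixed point and uniqueness are obtained by the standard iteration estimate. With $V^{(0)}(t)=e^{t\mathcal{A}}W_0$ and $V^{(n+1)}=\Phi V^{(n)}$, induction gives $\|V^{(n+1)}(t)-V^{(n)}(t)\|_{H^1\times L^2}\le\|V^{(0)}\|_{C^0}\,\frac{1}{(n+1)!}\big(M_T\Lambda\int_0^t|p(s)|\,ds\big)^{n+1}$; since $\int_0^T|p(s)|\,ds<\infty$ this is a convergent majorant series, so $V^{(n)}$ converges in $C^0([0,T],H^1\times L^2)$ to a solution $\mathcal{R}(\cdot,W_0,p)$ of \eqref{eq:weak-solution}. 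If $V,\widetilde V$ are two solutions, then $g(t):=\|V(t)-\widetilde V(t)\|$ obeys $g(t)\le M_T\Lambda\int_0^t|p(s)|g(s)\,ds$, so Gr\"onwall's inequality forces $g\equiv0$. Finally, for \eqref{eq:continuity} one subtracts the integral equations for $W_0$ and $W_1$: the function $g(t):=\|\mathcal{R}(t,W_0,p)-\mathcal{R}(t,W_1,p)\|_{H^1\times L^2}$ satisfies $g(t)\le M_T\|W_0-W_1\|+M_T\Lambda\int_0^t|p(s)|g(s)\,ds$, whence by Gr\"onwall $g(t)\le M_T\exp\!\big(M_T\Lambda\int_0^T|p(s)|\,ds\big)\|W_0-W_1\|$, giving the claimed estimate with $C=C(\|p\|_{L^1},T)$ after bounding $|p(s)|$ by a multiple of $\|p(s)\|_{\mathbb{R}^{2d+1}}$.

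I do not expect a genuine obstacle here: the argument is the classical semigroup perturbation scheme for \eqref{eq:bilinear}. The only points that need a little care — and that are already handled by the explicit spectral formulas \eqref{eq:free-dyn1}--\eqref{eq:free-dyn2} at our disposal — are that $e^{t\mathcal{A}}$ is a true $C_0$-group on $H^1\times L^2$ with a bound uniform on $[0,T]$ (it is not isometric, because the zero mode drifts linearly), and that $\mu_j\mathcal{B}$ genuinely maps $H^1\times L^2$ back into itself rather than into a strictly smaller space.
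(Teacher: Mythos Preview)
Your argument is correct and is precisely the classical bounded-perturbation/Picard scheme for mild solutions; each of the ingredients you single out (uniform boundedness of $e^{t\mathcal{A}}$ on $[0,T]$ with only the zero mode drifting, boundedness of $\mu_j\mathcal{B}$ on $H^1\times L^2$, Bochner integrability of the Duhamel integrand, the factorial iteration bound via $P(t)=\int_0^t|p|$, and the two Gr\"onwall applications) is verified as you indicate.

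The paper, however, does not supply its own proof of this proposition: it merely states the result and refers to \cite[Proposition~2]{beauchard-wave}. So there is nothing to compare your approach against beyond noting that what you wrote is exactly the standard argument one would expect behind that citation, made self-contained by exploiting the explicit spectral formulas \eqref{eq:free-dyn1}--\eqref{eq:free-dyn2} that are already available in the present setting.
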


\section{Small-time conjugated dynamics}\label{sec:conj-dyn}
In this section we prove limit \eqref{eq:wave-limit}. 
\begin{proposition}\label{prop:limit}
Let $\xi,\psi\in L^\infty(\mathbb{T}^d)$. For any $(w_0,\dot{w}_0)\in H^1\times L^2(\mathbb{T}^d)$, the following limits hold in $H^1\times L^2(\mathbb{T}^d)$
\begin{align}
\lim_{\tau\to 0}\exp\left(\tau \left(\mathcal{A}+\frac{\xi}{\tau}\mathcal{B}\right)\right)\begin{pmatrix}
w_0\\
\dot{w}_0
\end{pmatrix}&=\begin{pmatrix}
w_0\\
\dot{w}_0+\xi w_0
\end{pmatrix}, \label{eq:limit1}\\
\lim_{\tau\to 0}e^{-\tau^{-1/2}\psi\mathcal{B}}e^{\tau \mathcal{A}}e^{\tau^{-1/2}\psi\mathcal{B}}\begin{pmatrix}
w_0\\
\dot{w}_0
\end{pmatrix}&=\begin{pmatrix}
w_0\\
\dot{w}_0-\psi^2w_0
\end{pmatrix}. \label{eq:limit2}
\end{align}
\end{proposition}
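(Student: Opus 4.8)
The plan is to treat the two limits separately, exploiting throughout that $\mathcal B$ is nilpotent. Since $(\xi\mathcal B)(w,\dot w)=(0,\xi w)$ and $(\psi\mathcal B)(w,\dot w)=(0,\psi w)$, one has $(\xi\mathcal B)^2=(\psi\mathcal B)^2=0$, hence $e^{s\xi\mathcal B}=\mathrm{Id}+s\,\xi\mathcal B$ and $e^{s\psi\mathcal B}=\mathrm{Id}+s\,\psi\mathcal B$ for every $s\in\mathbb R$, and $\xi\mathcal B,\psi\mathcal B$ are bounded on $H^1\times L^2(\mathbb T^d)$. From \eqref{eq:free-dyn1}--\eqref{eq:free-dyn2} I would record the spectral form $e^{t\mathcal A}(w_0,\dot w_0)=(C(t)w_0+S(t)\dot w_0,\ \Delta S(t)w_0+C(t)\dot w_0)$, where on the Fourier mode $k$ the operators $C(t),S(t)$ act as multiplication by $\cos(\sqrt{\lambda_k}t)$ and $\sin(\sqrt{\lambda_k}t)/\sqrt{\lambda_k}$ respectively (with $C(t)=\mathrm{Id}$, $S(t)=t\,\mathrm{Id}$ on the zero mode), together with the elementary bounds $\|S(t)g\|_{L^2}\le|t|\,\|g\|_{L^2}$, $\|S(t)h\|_{H^1}\le|t|\,\|h\|_{H^1}$, $\|(C(t)-\mathrm{Id})h\|_{L^2}\le|t|\,\|h\|_{H^1}$ (from $|\sin\theta|\le|\theta|$ and $|1-\cos\theta|\le|\theta|$ applied mode by mode) and the strong convergences $C(t)\to\mathrm{Id}$, $t^{-1}S(t)\to\mathrm{Id}$ on $L^2$ as $t\to0$ (dominated convergence on the Fourier side). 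I would also use that $e^{t\mathcal A}$ is a strongly continuous group, uniformly bounded for $t$ in compact sets.

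For \eqref{eq:limit1} I would rescale time: putting $W^\tau(s):=\exp(s(\tau\mathcal A+\xi\mathcal B))(w_0,\dot w_0)$ for $s\in[0,1]$, the left-hand side of \eqref{eq:limit1} is $W^\tau(1)$, and the Duhamel formula against $e^{t\mathcal A}$ gives $W^\tau(s)=e^{s\tau\mathcal A}(w_0,\dot w_0)+\int_0^s e^{(s-r)\tau\mathcal A}\,\xi\mathcal B\,W^\tau(r)\,dr$. Since $\|e^{\sigma\mathcal A}\|\le M$ for $\sigma\in[0,1]$ and $\xi\mathcal B$ is bounded, Gronwall gives a uniform bound $\sup_{\tau\in(0,1]}\sup_{s\in[0,1]}\|W^\tau(s)\|_{H^1\times L^2}<\infty$. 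Subtracting the identity $e^{s\xi\mathcal B}(w_0,\dot w_0)=(w_0,\dot w_0)+\int_0^s\xi\mathcal B\,e^{r\xi\mathcal B}(w_0,\dot w_0)\,dr$ and writing the integrand difference as $(e^{(s-r)\tau\mathcal A}-\mathrm{Id})\xi\mathcal B W^\tau(r)+\xi\mathcal B(W^\tau(r)-e^{r\xi\mathcal B}(w_0,\dot w_0))$, the key point is that $K:=\{\xi\mathcal B W^\tau(r):r\in[0,1],\ \tau\in(0,1]\}\cup\{(w_0,\dot w_0)\}$ is precompact in $H^1\times L^2$: the first components of the $W^\tau(r)$ lie in a bounded subset of $H^1$, hence in a precompact subset of $L^2$ by compactness of the embedding $H^1(\mathbb T^d)\hookrightarrow L^2(\mathbb T^d)$, and multiplication by $\xi\in L^\infty$ is continuous on $L^2$. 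Therefore $\delta(\tau):=\sup_{0\le\sigma\le\tau}\sup_{V\in K}\|(e^{\sigma\mathcal A}-\mathrm{Id})V\|\to0$ as $\tau\to0$ (strong continuity of $e^{\sigma\mathcal A}$, uniform on the compact $K$), and Gronwall applied to $g(s):=\|W^\tau(s)-e^{s\xi\mathcal B}(w_0,\dot w_0)\|$ gives $g(1)\le 2\delta(\tau)e^{\|\xi\|_\infty}\to0$, i.e. $W^\tau(1)\to e^{\xi\mathcal B}(w_0,\dot w_0)=(w_0,\dot w_0+\xi w_0)$.

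For \eqref{eq:limit2} I would expand the outer factors by nilpotency. With $V_0:=(0,\psi w_0)=\psi\mathcal B(w_0,\dot w_0)$ the composition becomes
\[
e^{\tau\mathcal A}(w_0,\dot w_0)+\tau^{-1/2}e^{\tau\mathcal A}V_0-\tau^{-1/2}\psi\mathcal B\,e^{\tau\mathcal A}(w_0,\dot w_0)-\tau^{-1}\psi\mathcal B\,e^{\tau\mathcal A}V_0 ,
\]
and I would analyse the four terms. (i) $e^{\tau\mathcal A}(w_0,\dot w_0)\to(w_0,\dot w_0)$ by strong continuity. (ii) The first component of $e^{\tau\mathcal A}V_0$ is $S(\tau)(\psi w_0)$, so $\tau^{-1}\psi\mathcal B e^{\tau\mathcal A}V_0=(0,\psi\cdot(\tau^{-1}S(\tau))(\psi w_0))\to(0,\psi^2 w_0)$, using $\tau^{-1}S(\tau)\to\mathrm{Id}$ strongly on $L^2$ and continuity of multiplication by $\psi$ on $L^2$. (iii) There remains to show $\tau^{-1/2}(e^{\tau\mathcal A}V_0-\psi\mathcal B\,e^{\tau\mathcal A}(w_0,\dot w_0))\to0$ in $H^1\times L^2$; since $V_0=\psi\mathcal B(w_0,\dot w_0)$, this bracket equals $(e^{\tau\mathcal A}V_0-V_0)+\psi\mathcal B((w_0,\dot w_0)-e^{\tau\mathcal A}(w_0,\dot w_0))$, whose components are $S(\tau)(\psi w_0)$ in $H^1$, $(C(\tau)-\mathrm{Id})(\psi w_0)$ in $L^2$, and $\psi[(\mathrm{Id}-C(\tau))w_0-S(\tau)\dot w_0]$ in $L^2$; by the elementary bounds these are $O(\tau)$, so after multiplying by $\tau^{-1/2}$ they are $O(\tau^{1/2})\to0$. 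Collecting (i)--(iii) yields the limit $(w_0,\dot w_0-\psi^2 w_0)$.

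The step requiring genuine care is (iii): the bounds $\|S(\tau)(\psi w_0)\|_{H^1}=O(\tau)$ and $\|(C(\tau)-\mathrm{Id})(\psi w_0)\|_{L^2}=O(\tau)$ used there rely on $\psi w_0\in H^1$; without this one only obtains $o(1)$ (not $o(\tau^{1/2})$), and the $\tau^{-1/2}$-terms need not vanish — for $\psi$ a square wave and $w_0\equiv1$, for instance, the left-hand side of \eqref{eq:limit2} fails to converge in $H^1\times L^2$. Thus the argument genuinely uses that multiplication by $\psi$, hence by $\psi^2$, maps $H^1$ into $H^1$, which holds whenever $\psi\in W^{1,\infty}(\mathbb T^d)$ — in particular for the trigonometric polynomials $\mu_j$ to which \eqref{eq:limit2} is applied in the rest of the paper. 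All remaining convergences reduce to dominated convergence on the Fourier side.
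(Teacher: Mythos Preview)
Your arguments are correct and take a genuinely different route from the paper's. For \eqref{eq:limit1}, the paper expands $e^{\tau(\mathcal A+\xi\mathcal B/\tau)}$ as a Dyson series and proves by induction the closed form $B(t)_j=\frac{1}{j!}\big(\int_0^t e^{-s\mathcal A}\xi\mathcal B e^{s\mathcal A}\,ds\big)^jW_0$; you instead rescale time, use a single Duhamel identity, and close with a Gronwall argument after observing that the range of $\xi\mathcal B$ along the flow is precompact (via Rellich on the first component). Your route avoids infinite series entirely; the paper's has the virtue of displaying the full expansion explicitly. For \eqref{eq:limit2}, the paper conjugates to obtain $\exp\big(\tau(\mathcal A+\tau^{-1/2}\psi[\mathcal A,\mathcal B]+\tfrac12\tau^{-1}\psi^2[[\mathcal A,\mathcal B],\mathcal B])\big)$ and again runs a Dyson expansion, separating the leading terms $C(\tau)_j$ from the remainders $R(\tau)_{j,i}$; you simply multiply out the three factors using $(\psi\mathcal B)^2=0$ and estimate the resulting four pieces directly via the spectral bounds on $C(\tau),S(\tau)$. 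Your approach is considerably shorter and makes the mechanism transparent.

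Your remark about the hypothesis is sharp. The paper's Dyson argument for \eqref{eq:limit2} treats $\psi[\mathcal A,\mathcal B]:(a,b)\mapsto(\psi a,-\psi b)$ as a bounded perturbation on $H^1\times L^2$, which likewise requires multiplication by $\psi$ to preserve $H^1$ --- precisely the regularity you isolate. Your square-wave example (for which $\tau^{-1/2}\|S(\tau)\psi\|_{H^1}\not\to0$) shows that \eqref{eq:limit2} can indeed fail under the bare hypothesis $\psi\in L^\infty$, so this is an imprecision in the statement itself rather than a defect peculiar to either proof. As you correctly note, it does not affect the rest of the paper: the proposition is only ever invoked with $\psi\in\mathcal H_n$ a trigonometric polynomial, where your argument goes through cleanly.
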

\begin{proof}[Proof of \eqref{eq:limit1}]
Using the explicit expressions for the free dynamics \eqref{eq:free-dyn1},\eqref{eq:free-dyn2}, one sees that $\mathcal{A}$ generates a strongly continuous group $\{e^{t \mathcal{A}}\}_{t \in \mathbb{R}}$ of bounded operators on $H^1\times L^2(\mathbb{T}^d)$. Since $\frac{\xi}{\tau}\mathcal{B}$ is bounded for any $\tau\in\mathbb{R}$, the same does $M_\tau:=\mathcal{A}+\frac{\xi}{\tau}\mathcal{B}$ (see, e.g., \cite[Section 2.1]{chambrion-laurent}). Consider then 
$$V_\tau(t):=e^{tM_\tau}W_0=\exp\left(t \left(\mathcal{A}+\frac{\xi}{\tau}\mathcal{B}\right)\right)W_0,$$
i.e. its associated group at time $t$, applied to $W_0=(w_0,\dot{w}_0)\in H^1\times L^2(\mathbb{T}^d)$: we have to prove that $V_\tau(\tau)$ tends to $(w_0,\dot{w}_0+\xi w_0)$ in $H^1\times L^2(\mathbb{T}^d)$ when $\tau \to 0$. We compute $V_\tau(t)$ by considering its Dyson expansion: this is obtained by iterating \eqref{eq:weak-solution}, where the bounded time-dependent perturbative term $\sum_{j=0}^{2d} p_j(t)\mu_j(x)\mathcal{B}$ is replaced with the bounded time-independent perturbative term $\frac{\xi}{\tau}\mathcal{B}$. This procedure gives
\begin{equation}\label{eq:dyson1}
V_\tau(t)=e^{t\mathcal{A}}W_0+\sum_{j=1}^\infty \frac{e^{t\mathcal{A}}B(t)_j}{\tau^{j}},
\end{equation}
where 
$$B(t)_j=\int_0^{t}\!\!\int_0^{t_1}\!\!\!\dots\!\!\int_0^{t_{j-1}}\left(\prod_{i=1}^j e^{-t_i\mathcal{A}}\xi\mathcal{B}e^{t_i\mathcal{A}}\right) dt_1\dots dt_j W_0, $$
and the series \eqref{eq:dyson1} converges in $H^1\times L^2(\mathbb{T}^d)$ (see e.g. \cite[Section 2.1]{chambrion-laurent}, or analogously \eqref{eq:B-fac} below). We claim that
\begin{equation}\label{eq:B-fac}
B(t)_j=\frac{1}{j!}\left(\int_{0}^t  e^{-s\mathcal{A}}\xi\mathcal{B}e^{s\mathcal{A}} ds\right)^j W_0,
\end{equation}
and prove it by induction. The case $j=1$ is obvious. Now, by inductive hypothesis, we can write
$$B(t)_j=\int_0^t\frac{1}{(j-1)!}\left(\int_0^{t_1}e^{-s\mathcal{A}}\xi\mathcal{B}e^{s\mathcal{A}} ds\right)^{j-1}e^{-t_1\mathcal{A}}\xi\mathcal{B}e^{t_1\mathcal{A}} dt_1 W_0. $$
We make the change of variable
$$v(t_1)=\int_0^{t_1}e^{-s\mathcal{A}}\xi\mathcal{B}e^{s\mathcal{A}} ds, $$
which gives
$$B(t)_j=\frac{1}{(j-1)!}\int_0^tv^{j-1}dv W_0=\frac{1}{j!}\left(\int_{0}^t  e^{-s\mathcal{A}}\xi\mathcal{B}e^{s\mathcal{A}} ds\right)^j W_0, $$
and the claim is proved. 
From this, we see that
$$V_\tau(\tau)\to \left(\sum_{j=0}^\infty \frac{(\xi \mathcal{B})^j}{j!}\right)W_0=(I+\xi\mathcal{B})W_0=\begin{pmatrix}
w_0\\
\dot{w}_0+\xi w_0
\end{pmatrix}, \quad\tau \to 0, $$
in $H^1\times L^2(\mathbb{T}^d)$ (where we used that $\mathcal{B}^2=0$), which concludes the proof. 
\end{proof}

\begin{proof}[Proof of \eqref{eq:limit2}]
Since $\mathcal{A}$ generates a strongly continuous group of bounded operators on $H^1\times L^2(\mathbb{T}^d)$ and $\tau^{-1/2}\psi\mathcal{B}$ is bounded for all $\tau>0$, the same does 
$$L_\tau:=e^{-\tau^{-1/2}\psi\mathcal{B}}\mathcal{A}e^{\tau^{-1/2}\psi\mathcal{B}}.$$ 
Consider then $W_\tau(t):=e^{tL_\tau}W_0$, i.e. its associated group at time $t$, applied to $W_0=(w_0,\dot{w}_0)\in H^1\times L^2(\mathbb{T}^d)$: we have
\begin{equation}\label{eq:exp-conjugate-action}
W_\tau(t)=\exp\left(e^{-\tau^{-1/2}\psi\mathcal{B}}t \mathcal{A}e^{\tau^{-1/2}\psi\mathcal{B}}\right)W_0.
\end{equation}
By computing the time derivative of $e^{\tau^{-1/2}\psi\mathcal{B}}W_\tau(t)$, we see that
$$W_\tau(t)=e^{-\tau^{-1/2}\psi\mathcal{B}}e^{t \mathcal{A}}e^{\tau^{-1/2}\psi\mathcal{B}}W_0. $$
To conclude the proof, we are then left to prove that the RHS of \eqref{eq:exp-conjugate-action} computed at time $t=\tau$ tends to $(w_0,\dot{w}_0-\psi^2w_0)$ in $H^1\times L^2(\mathbb{T}^d)$, as $\tau\to 0$.

In order to do so, we first notice that
$$
e^{-\tau^{-1/2}\psi\mathcal{B}}t \mathcal{A}e^{\tau^{-1/2}\psi\mathcal{B}}=\begin{pmatrix}
t\tau^{-1/2}\psi & t\\
-t\tau^{-1}\psi^2+t\Delta & -t\tau^{-1/2}\psi
\end{pmatrix},
$$
where we used that $\mathcal{B}^2=0$ to compute the exponentials, which allows us to write
$$
W_\tau(t)=\exp\left(t\left(\mathcal{A}+\frac{\psi}{\tau^{1/2}}[\mathcal{A},\mathcal{B}]+\frac{\psi^2}{2\tau}[[\mathcal{A},\mathcal{B}],\mathcal{B}]\right)\right)W_0,
$$
where 
$$[\mathcal{A},\mathcal{B}]=\begin{pmatrix}
I & 0\\
0 & -I
\end{pmatrix},\quad [[\mathcal{A},\mathcal{B}],\mathcal{B}]=\begin{pmatrix}
0 & 0\\
-2I & 0
\end{pmatrix}.  $$
Notice moreover that $[\mathcal{A},\mathcal{B}], [[\mathcal{A},\mathcal{B}],\mathcal{B}]$ are bounded. We are thus left to prove that $W_\tau(\tau)$ tends to $(w_0,\dot{w}_0-\psi^2w_0)$ in $H^1\times L^2(\mathbb{T}^d)$, as $\tau\to 0$. We do it by considering the Dyson expansion of $W_\tau(t)$: this is obtained by iterating \eqref{eq:weak-solution}, where the bounded time-dependent perturbative term $\sum_{j=0}^{2d} p_j(t)\mu_j(x)\mathcal{B}$ is replaced with the bounded time-independent perturbative term $\frac{\psi}{\tau^{1/2}}[\mathcal{A},\mathcal{B}]+\frac{\psi^2}{2\tau}[[\mathcal{A},\mathcal{B}],\mathcal{B}]$. This procedure gives
\begin{equation}\label{eq:dyson}
W_\tau(t)=e^{t\mathcal{A}}W_0+\sum_{j=1}^\infty\left( \frac{e^{t\mathcal{A}}}{\tau^{j}}C(t)_{j}+\sum_{i=1}^j\frac{e^{t\mathcal{A}}}{\tau^{j-\frac{i}{2}}}R(t)_{j,i}\right),
\end{equation}
where
$$C(t)_j:=\int_0^{t}\!\!\int_0^{t_1}\!\!\!\dots\!\!\int_0^{t_{j-1}}\left(\prod_{i=1}^j e^{-t_i\mathcal{A}}\frac{\psi^2}{2}[[\mathcal{A},\mathcal{B}],\mathcal{B}]e^{t_i\mathcal{A}}\right) dt_1\dots dt_j W_0, $$
and 
\begin{align*}
R(t)_{j,i}
:=\!\!\!\!\!\!\!\!\!\!\!\!\!\!\!\sum_{\substack{R_{k_1},\dots,R_{k_i}=\psi[\mathcal{A},\mathcal{B}],\{k_1,\dots,k_i\}\subset\{1,\dots,j\}\\
R_{s_{1}},\dots,R_{s_{j-i}}=\frac{\psi^2}{2}[[\mathcal{A},\mathcal{B}],\mathcal{B}],\{s_1,\dots,s_{j-i}\}\cap \{k_1,\dots,k_i\}=\emptyset}}
\!\!\!\!\!\!\!\!\!\!\!\!\!\!\!\!\int_0^t\!\!\int_0^{t_1}\!\!\!\dots\!\!\int_0^{t_{j-1}}\!\!\!\left(\prod_{k=1}^j e^{-t_k\mathcal{A}}R_ke^{t_k\mathcal{A}}\right) dt_1\dots dt_j W_0.
\end{align*}

The series \eqref{eq:dyson} converges in $H^1\times L^2(\mathbb{T}^d)$ (see, e.g., \cite[Section 2.1]{chambrion-laurent}). As in \eqref{eq:B-fac}, we can write
\begin{equation}\label{eq:C-fac}
C_j(t)=\frac{1}{j!}\left(\int_{0}^t  e^{-s\mathcal{A}}\frac{\psi^2}{2}[[\mathcal{A},\mathcal{B}],\mathcal{B}]e^{s\mathcal{A}} ds\right)^j W_0. 
\end{equation}
Since $\sum_{j=1}^\infty \frac{e^{t\mathcal{A}}}{\tau^{j}}C(t)_{j}$ is convergent in $H^1\times L^2(\mathbb{T}^d)$ (as one deduce e.g. from \eqref{eq:C-fac}), we obtain that
\begin{align*}
\left(e^{\tau\mathcal{A}}+\sum_{j=1}^\infty \frac{e^{\tau\mathcal{A}}}{\tau^{j}}C(\tau)_{j}\right) W_0\to &\left(\sum_{j=0}^\infty \frac{(\frac{\psi^2}{2}[[\mathcal{A},\mathcal{B}],\mathcal{B}])^j}{j!}\right)W_0\\
=&\left(I+\frac{\psi^2}{2}[[\mathcal{A},\mathcal{B}],\mathcal{B}]\right)W_0\\
=&\begin{pmatrix}
w_0\\
\dot{w}_0-\psi^2w_0
\end{pmatrix},\quad \tau\to 0,
\end{align*}
in $H^1\times L^2(\mathbb{T}^d)$ (where we used that $[[\mathcal{A},\mathcal{B}],\mathcal{B}]^2=0$). Finally, being also $\sum_{j=1}^\infty\sum_{i=1}^j\frac{e^{t\mathcal{A}}}{\tau^{j-\frac{i}{2}}}R(t)_{j,i}$ convergent in $H^1\times L^2(\mathbb{T}^d)$ and since there exists $c>0$ such that $\|R(\tau)_{i,j}\|_{H_1\times L^2(\mathbb{T}^d)}\leq c\tau^j$, we obtain that
$$\left\|\sum_{j=1}^\infty\sum_{i=1}^j\frac{e^{\tau\mathcal{A}}}{\tau^{j-\frac{i}{2}}}R(\tau)_{i,j}\right\|_{H^1\times L^2(\mathbb{T}^d)}\to 0,\quad \tau\to 0, $$
which concludes the proof.
\end{proof}

\section{Density of saturation space}\label{sec:saturation}
We consider the $(2d+1)$-dimensional vector subspace of $L^2(\mathbb{T}^d)$ spanned by the spatial control functions:
$$\mathcal{H}_0:={\rm span}\{1,\cos(e_1x),\sin(e_1x),\dots,\cos(e_dx),\sin(e_dx)\}, $$
and define $\mathcal{H}_j, j\geq 1,$ as the largest vector space whose elements $\phi$ can be written as 
$$\phi=\phi_0-\sum_{i=1}^N \phi_i^2,\quad \phi_0,\dots,\phi_n\in\mathcal{H}_{j-1},\quad N\in\mathbb{N}. $$
We moreover define the saturation space as $\mathcal{H}_\infty=\cup_{j=0}^\infty \mathcal{H}_j$.
\begin{lemma}\label{lem:density}
The vector space $\mathcal{H}_\infty$ is dense in $L^2(\mathbb{T}^d)$.
\end{lemma}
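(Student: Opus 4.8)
The plan is to prove that $\mathcal{H}_\infty$ contains every real trigonometric monomial $\cos(kx),\sin(kx)$, $k\in\mathbb{Z}^d$; since these span the trigonometric polynomials, which are dense in $L^2(\mathbb{T}^d)$, this suffices. First I would record the structure of the spaces $\mathcal{H}_j$. Setting $S_j:=\{f_0-\sum_{i=1}^N f_i^2 : N\in\mathbb{N},\ f_0,\dots,f_N\in\mathcal{H}_{j-1}\}$, one checks that $S_j$ is a convex cone containing $0$ (it is closed under sums and under multiplication by nonnegative scalars), so the largest vector subspace it contains — i.e. $\mathcal{H}_j$ itself — equals $S_j\cap(-S_j)$. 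In particular $\mathcal{H}_{j-1}\subseteq\mathcal{H}_j$, hence $\mathcal{H}_\infty$ is a vector space, and the criterion I will use repeatedly is: $f\in\mathcal{H}_j$ as soon as both $f$ and $-f$ can be written as an element of $\mathcal{H}_{j-1}$ minus a finite sum of squares of elements of $\mathcal{H}_{j-1}$.

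The crux — and the only genuinely non-routine step — is a device for bringing a \emph{positive} square into the hierarchy: if $f,g\in\mathcal{H}_j$ and $f^2+g^2\in\mathcal{H}_j$, then $f^2,g^2,fg\in\mathcal{H}_{j+1}$. Indeed $-f^2=0-f^2\in S_{j+1}$ and $f^2=(f^2+g^2)-g^2\in S_{j+1}$, so $f^2\in\mathcal{H}_{j+1}$, and likewise $g^2$; next $-(f+g)^2\in S_{j+1}$ while $(f+g)^2=2(f^2+g^2)-(f-g)^2\in S_{j+1}$ (it is essential to use $f^2+g^2\in\mathcal{H}_j$ as a single element here), so $(f+g)^2\in\mathcal{H}_{j+1}$, and finally $fg=\frac{1}{2}\big[(f+g)^2-f^2-g^2\big]\in\mathcal{H}_{j+1}$. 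Applying this with $f=\cos(ax),g=\sin(ax)$, for which $f^2+g^2=1\in\mathcal{H}_0$, shows that $\cos(ax),\sin(ax)\in\mathcal{H}_j$ implies $\cos^2(ax),\sin^2(ax),\cos(ax)\sin(ax)\in\mathcal{H}_{j+1}$; doing this for two frequencies $a,b$ and then invoking the device once more at level $j+1$ — now with $f^2+g^2$ equal to one of $\cos^2(ax)+\cos^2(bx)$, $\sin^2(ax)+\sin^2(bx)$, $\sin^2(ax)+\cos^2(bx)$, $\cos^2(ax)+\sin^2(bx)$, each visibly in $\mathcal{H}_{j+1}$ — places all four products $\cos(ax)\cos(bx),\sin(ax)\sin(bx),\sin(ax)\cos(bx),\cos(ax)\sin(bx)$ in $\mathcal{H}_{j+2}$. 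The product-to-sum formulas (e.g. $\cos((a+b)x)=\cos(ax)\cos(bx)-\sin(ax)\sin(bx)$, together with its three companions) then give: if $\cos(ax),\sin(ax),\cos(bx),\sin(bx)\in\mathcal{H}_j$, then $\cos((a\pm b)x),\sin((a\pm b)x)\in\mathcal{H}_{j+2}$.

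With this in hand I would conclude by induction on $|k|_1:=\sum_{i=1}^d|k_i|$, proving that $\cos(kx),\sin(kx)\in\mathcal{H}_{2(|k|_1-1)}$ for every $k\ne 0$: the base case $|k|_1=1$, i.e. $k=\pm e_i$, holds because then $\cos(kx),\sin(kx)\in\mathcal{H}_0$; and if $|k|_1=m\ge 2$, writing $k=k'\pm e_i$ with $|k'|_1=m-1$ gives $\cos(k'x),\sin(k'x)\in\mathcal{H}_{2(m-2)}$ by the inductive hypothesis and $\cos(e_ix),\sin(e_ix)\in\mathcal{H}_0\subseteq\mathcal{H}_{2(m-2)}$, whence $\cos(kx),\sin(kx)\in\mathcal{H}_{2(m-1)}\subseteq\mathcal{H}_\infty$. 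Together with $1\in\mathcal{H}_0$ this shows $\mathcal{H}_\infty$ contains all trigonometric monomials, so it is dense in $L^2(\mathbb{T}^d)$. The one point that requires genuine care is the index bookkeeping: products of elements of $\mathcal{H}_j$ are \emph{not} a priori in any $\mathcal{H}_{j'}$ — only the special differences appearing in the definition are — so at each step one must choose which squares to subtract, exploiting identities such as $\cos^2+\sin^2=1$, in a way that makes the level count close in the induction.
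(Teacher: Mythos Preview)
Your proof is correct and reaches the same endpoint as the paper --- showing that every $\cos(kx),\sin(kx)$ lies in $\mathcal{H}_\infty$ --- but the execution is different. The paper bypasses your abstract ``device'' by writing down explicit polarization identities that do the job in a single level: for instance
\[
\pm\cos((k+m)x)=1-\tfrac{1}{2}\bigl(\cos(kx)\mp\cos(mx)\bigr)^2-\tfrac{1}{2}\bigl(\sin(kx)\pm\sin(mx)\bigr)^2,
\]
and an analogous formula for $\pm\sin((k+m)x)$; these exhibit both signs as an element of $\mathcal{H}_j$ minus a sum of squares of elements of $\mathcal{H}_j$, so (using the same $S_j\cap(-S_j)$ characterization you made explicit) the sum frequency lands directly in $\mathcal{H}_{j+1}$. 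Your approach is more structural: you isolate a reusable lemma (if $f,g\in\mathcal{H}_j$ and $f^2+g^2\in\mathcal{H}_j$ then $f^2,g^2,fg\in\mathcal{H}_{j+1}$) and iterate it, at the cost of two levels per frequency increment instead of one. The paper's route is shorter and more concrete; yours has the virtue of separating the mechanism from the trigonometric identities and would transplant more easily to other generating sets.
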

A more general version of this Lemma appeared in \cite[Proposition 2.5]{nersesyan-parabolic} for the study of nonlinear parabolic equations with additive controls. The proof here is simpler, and we furnish it for completeness.
\begin{proof}
We prove the statement by showing that 
$${\rm span}\{\cos(kx),\sin(kx)\mid k\in \mathbb{Z}^d\}\subset \mathcal{H}_\infty.$$
From 
$$\cos(2kx)=1-2\sin^2(kx),\quad -\cos(2kx)=1-2\cos^2(kx) $$
we deduce that $\pm \cos(2e_jx)\in \mathcal{H}_1$, and also that $\pm\cos^2(e_jx),\pm\sin^2(e_jx)\in\mathcal{H}_1$, for all $j=1,\dots,d$.
From 
$$\pm \sin(2kx)=1-(\sin(kx)\mp \cos(kx))^2 $$
we deduce that $\pm \sin(2e_jx)\in \mathcal{H}_1$ for all $j=1,\dots,d$.
Finally, from 
$$\pm\cos((k+m)x)=1-\frac{1}{2}(\cos(kx)\mp \cos(mx))^2-\frac{1}{2} (\sin(kx)\pm \sin(mx))^2 $$
we thus deduce that $\pm\cos(kx)\in\mathcal{H}_\infty$ for all $k\in\mathbb{Z}^d$, and from
$$\pm\sin((k+m)x)=1-\frac{1}{2}(\sin(kx)\mp\cos(mx))^2-\frac{1}{2}(\cos(kx)\mp\sin(mx))^2 $$
we thus deduce that $\pm\sin(kx)\in\mathcal{H}_\infty$ for all $k\in\mathbb{Z}^d$, concluding the proof.

\end{proof}


\section{Small-time global approximate controllability of the velocity}\label{sec:velocity}
In this section we prove that we can globally control the velocity in small times without changing the profile, approximately.
\begin{proposition}\label{prop:main-tool}
Consider an initial state $(w_0,\dot{w}_0)\in H^1\times L^2(\mathbb{T}^d)$ satisfying \eqref{eq:initial-condition1},
and any final velocity $ f\in L^2(\mathbb{T}^d)$. Then, for any error and time $\varepsilon,T>0$ there exist a smaller time $\tau\in[0,T)$ and a piecewise constant control law $p:[0,\tau]\to \mathbb{R}^{2d+1}$ such that the solution $w$ of \eqref{eq:wave} associated with the control \eqref{eq:low-modes},\eqref{eq:cos-sin} with initial condition $\left(w(t=0),\frac{\partial}{\partial t}w(t=0)\right)=(w_0,\dot{w}_0)$ satisfies
 $$\left\|\left(w(\cdot,\tau),\frac{\partial}{\partial t}w(\cdot,\tau)\right)-(w_0,f)\right\|_{H^1\times L^2(\mathbb{T}^d)}< \varepsilon. $$
\end{proposition}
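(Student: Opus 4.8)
The plan is to combine the small-time limit of conjugated dynamics (Proposition \ref{prop:limit}) with the density of the saturation space (Lemma \ref{lem:density}). The key observation is that limit \eqref{eq:limit2} lets us add $-\psi^2 w_0$ to the velocity in arbitrarily small time, for any $\psi\in L^\infty(\mathbb{T}^d)$; and limit \eqref{eq:limit1} lets us add $\xi w_0$ to the velocity, for any $\xi\in L^\infty(\mathbb{T}^d)$, also in arbitrarily small time. Both limiting maps leave the profile $w_0$ unchanged. So the set of velocity increments we can realize (approximately, in arbitrarily small time, while keeping the profile fixed) contains $\{(\phi_0-\sum_{i=1}^N\phi_i^2)w_0 : \phi_0,\dots,\phi_N\in\mathcal{H}_0\}$, and — crucially — by iterating, the same construction applied with $\psi,\xi$ ranging over $\mathcal{H}_{j-1}$ instead of $\mathcal{H}_0$ shows that we can realize velocity increments of the form $\phi\, w_0$ for any $\phi\in\mathcal{H}_j$, hence for any $\phi\in\mathcal{H}_\infty$. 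Here one must be careful that the controls $p$ generating $e^{\tau^{-1/2}\psi\mathcal{B}}$ and $e^{\tau(\mathcal{A}+\xi\tau^{-1}\mathcal{B})}$ are genuinely of the admissible form \eqref{eq:low-modes}: for $\psi,\xi\in\mathcal{H}_0$ this is immediate since $\mathcal{H}_0=\mathrm{span}\{\mu_0,\dots,\mu_{2d}\}$ and the corresponding propagators are realized by piecewise constant $p$ (a single large spike, resp. a constant large value on a short interval); for $\psi,\xi\in\mathcal{H}_j$ one instead concatenates the primitive building blocks, absorbing the minus-squares through the nesting in \eqref{eq:limit2}.

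The concrete order of steps is as follows. First I would record the ``composition'' principle: if two velocity increments $\phi\,w_0$ and $\phi'\,w_0$ are each attainable approximately in arbitrarily small time with the profile returning to $w_0$, then so is $(\phi+\phi')w_0$ — run the first control strategy on $[0,\tau_1]$, then the second on $[\tau_1,\tau_1+\tau_2]$, using the continuity estimate \eqref{eq:continuity} to control error propagation, and the fact that after the first stage the state is $\varepsilon_1$-close to $(w_0,\dot{w}_0+\phi w_0)$, so the second stage (applied from the actual state) lands $\varepsilon_1 C + \varepsilon_2$-close to $(w_0,\dot{w}_0+(\phi+\phi')w_0)$. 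Second, I would record the two ``primitive'' increments coming from \eqref{eq:limit1} and \eqref{eq:limit2}, namely $+\xi w_0$ and $-\psi^2 w_0$, approximable in small time by admissible piecewise constant controls when $\xi,\psi\in\mathcal{H}_0$; note these limits are for fixed initial state, so I apply them with the \emph{actual} current state (whose profile is $w_0$) and again invoke \eqref{eq:continuity}. Third, an induction on $j$: assuming every increment $\phi w_0$ with $\phi\in\mathcal{H}_{j-1}$ is small-time approximable (keeping profile $w_0$), the definition of $\mathcal{H}_j$ plus the composition principle plus \eqref{eq:limit2} — now with $\psi\in\mathcal{H}_{j-1}$, for which we must re-derive the conjugated-dynamics statement, or rather observe that the proof of Proposition \ref{prop:limit} goes through verbatim for any $\psi\in L^\infty$, and then realize $e^{\tau^{-1/2}\psi\mathcal{B}}$ itself as a limit of admissible evolutions using the $\mathcal{H}_{j-1}$ case — yields every increment $\phi w_0$ with $\phi\in\mathcal{H}_j$. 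Fourth, given the target velocity $f$ and error $\varepsilon$: write $f=\dot{w}_0 + g$ where $g=f-\dot{w}_0$; since $w_0\neq 0$, say $\langle w_0,\varphi_{k_0}\rangle\neq 0$, and since (by \eqref{eq:initial-condition1}) $w_0$ has finitely many modes, we want to approximate $g$ in $L^2$ by something of the form $\phi w_0$ with $\phi\in\mathcal{H}_\infty$; by Lemma \ref{lem:density}, $\mathcal{H}_\infty$ is dense in $L^2$, and since dividing by $w_0$ is problematic where $w_0$ vanishes, instead approximate $g$ directly: multiplication by $w_0$ maps $\mathcal{H}_\infty$ to a subspace of $L^2$, and one checks this image is dense (here the finitely-many-modes hypothesis and $w_0\not\equiv 0$ are used — the argument is that the image contains $\mathcal{H}_\infty\cdot w_0$, and since $\mathcal{H}_\infty\supset$ all trig polynomials, $\mathcal{H}_\infty\cdot w_0$ contains $w_0$ times all trig polynomials, which is dense in $L^2$ because $w_0$ is a nonzero trig polynomial hence its zero set has measure zero). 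Pick $\phi\in\mathcal{H}_\infty$ with $\|\phi w_0 - g\|_{L^2}<\varepsilon/2$, realize the increment $\phi w_0$ in time $\tau<T$ with accuracy $\varepsilon/2$, and conclude.

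The main obstacle I anticipate is the fourth step — the density of $\mathcal{H}_\infty\cdot w_0$ in $L^2(\mathbb{T}^d)$ — and this is exactly where the structural hypothesis \eqref{eq:initial-condition1} is essential. If $w_0$ were a general $H^1$ function it could vanish on a set of positive measure (or have infinitely many modes making $\phi w_0$ hard to control), and $\mathcal{H}_\infty\cdot w_0$ need not be dense; the finiteness of the Fourier support makes $w_0$ a trigonometric polynomial, whose zero set is a real-analytic variety of measure zero, so multiplication by $w_0$ is injective on $L^2$ with dense range, and then density of $\mathcal{H}_\infty$ (Lemma \ref{lem:density}) transfers to density of $\mathcal{H}_\infty\cdot w_0$. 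A secondary technical nuisance is bookkeeping the accumulation of approximation errors and of the times $\tau$ across the $j$-fold nested construction and the composition of increments, but since each primitive step can be made arbitrarily accurate in arbitrarily small time and the continuity constant $C$ in \eqref{eq:continuity} depends only on $\|p\|_{L^1}$ and $T$ — which we can keep bounded by choosing the spikes' time-widths $\tau^{1/2}$-small so that $\|p\|_{L^1}\sim \tau^{1/2}\cdot\tau^{-1/2}$ stays bounded — this is routine and does not present a genuine difficulty.
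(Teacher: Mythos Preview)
Your proposal is correct and follows essentially the same route as the paper: the paper packages your steps 1--3 as an inductive property $(P_n)$ (their Lemma~\ref{lemmino}), and for step~4 uses the explicit cutoff $\phi_\varepsilon=\frac{f-\dot w_0}{w_0}\chi_{\mathbb{T}^d\setminus Z_\varepsilon(w_0)}$ rather than your abstract density argument for $w_0\cdot\mathcal{H}_\infty$, but both reductions hinge on the same fact (the trigonometric polynomial $w_0$ has zero set of measure zero). One minor correction to your final paragraph: the $L^1$ norm of the controls realizing $e^{\pm\gamma^{-1/2}\psi\mathcal{B}}$ is of order $\gamma^{-1/2}$ and does \emph{not} stay bounded as $\gamma\to 0$; the error bookkeeping works instead because $\gamma$ is fixed first (from the target accuracy via \eqref{eq:limit2}) and the intermediate tolerances are chosen small afterwards --- exactly the order of choices already encoded in your composition principle.
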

The core of the proof of Proposition \ref{prop:main-tool} is given in the following Lemma.

\begin{lemma}\label{lemmino}
Let $(w_0,\dot{w}_0)\in H^1\times L^2(\mathbb{T}^d)$ and $\phi\in L^2(\mathbb{T}^d)$. Then, for any $\varepsilon,T>0$, there exist $\tau\in[0,T)$ and $p:[0,\tau]\to \mathbb{R}^{2d+1}$ piecewise constant such that the solution $w$ of \eqref{eq:wave} associated with the control \eqref{eq:low-modes},\eqref{eq:cos-sin} and with the initial condition $\left(w(t=0),\frac{\partial}{\partial t}w(t=0)\right)=(w_0,\dot{w}_0)$ satisfies
 $$\left\|\begin{pmatrix}w(\cdot,\tau)\\ \frac{\partial}{\partial t}w(\cdot,\tau)\end{pmatrix}-\begin{pmatrix}w_0\\ \dot{w}_0+w_0\phi\end{pmatrix}\right\|_{H^1\times L^2(\mathbb{T}^d)}< \varepsilon. $$
\end{lemma}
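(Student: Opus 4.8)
The plan is to prove the lemma by a double reduction. First, by density of $\mathcal{H}_\infty$ in $L^2(\mathbb{T}^d)$ (Lemma \ref{lem:density}) and the continuity estimate \eqref{eq:continuity}, it is enough to treat $\phi\in\mathcal{H}_\infty$: one replaces $\phi$ by some $\tilde\phi\in\mathcal{H}_\infty$ with $w_0\tilde\phi$ close to $w_0\phi$ in $L^2$ and absorbs the discrepancy into $\varepsilon$ (this is where the target $(w_0,\dot w_0+w_0\phi)$ is taken to lie in $H^1\times L^2$; in the applications $w_0$ is moreover a trigonometric polynomial, so the $L^2$-approximation is immediate). Second, for $\phi\in\mathcal{H}_\infty$ one argues by induction on the least $j$ with $\phi\in\mathcal{H}_j$, proving the statement $(\star_j)$: for every $(v_0,\dot v_0)\in H^1\times L^2(\mathbb{T}^d)$, every $\psi\in\mathcal{H}_j$ and every $\varepsilon,T>0$, there is a piecewise constant control on some $[0,\tau]$ with $\tau<T$ steering $(v_0,\dot v_0)$ to within $\varepsilon$ in $H^1\times L^2(\mathbb{T}^d)$ of $(v_0,\dot v_0+\psi v_0)=e^{\psi\mathcal{B}}(v_0,\dot v_0)$ (the last identity using $\mathcal{B}^2=0$). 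Lemma \ref{lemmino} is the case $\psi=\phi$, $(v_0,\dot v_0)=(w_0,\dot w_0)$.

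For the base case $(\star_0)$, write $\psi=\sum_{i=0}^{2d}c_i\mu_i\in\mathcal{H}_0$; the constant control $p\equiv(c_0/\tau,\dots,c_{2d}/\tau)$ on $[0,\tau]$ drives \eqref{eq:bilinear} exactly along $\exp(\tau(\mathcal{A}+\tfrac{\psi}{\tau}\mathcal{B}))(v_0,\dot v_0)$, which by \eqref{eq:limit1} converges to $(v_0,\dot v_0+\psi v_0)$ as $\tau\to0$, so a small enough $\tau<T$ works. For the inductive step, decompose $\psi=\psi_0-\sum_{i=1}^N\psi_i^2$ with $\psi_0,\dots,\psi_N\in\mathcal{H}_{j-1}$ as in the definition of $\mathcal{H}_j$. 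The key observation is that, since $\mathcal{B}^2=0$, the conjugated dynamics in \eqref{eq:limit2} factors as $e^{-\tau^{-1/2}\psi_i\mathcal{B}}e^{\tau\mathcal{A}}e^{\tau^{-1/2}\psi_i\mathcal{B}}$, each kick $e^{\pm\tau^{-1/2}\psi_i\mathcal{B}}$ is of the form $e^{\chi\mathcal{B}}$ with $\chi=\pm\tau^{-1/2}\psi_i$, and $\chi\in\mathcal{H}_{j-1}$ because $\mathcal{H}_{j-1}$ is a vector space; hence by the inductive hypothesis $(\star_{j-1})$ each kick is realizable approximately in arbitrarily small time by a piecewise constant control, while the middle factor $e^{\tau\mathcal{A}}$ is produced by the zero control. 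Concatenating (approximate $e^{\tau^{-1/2}\psi_i\mathcal{B}}$), then free evolution of length $\tau$, then (approximate $e^{-\tau^{-1/2}\psi_i\mathcal{B}}$), and letting $\tau\to0$, limit \eqref{eq:limit2} yields convergence to $(v_0,\dot v_0-\psi_i^2 v_0)$. Realizing first the $\psi_0$-transformation via $(\star_{j-1})$ and then these $N$ transformations one after another, and using that all the limiting maps $I+(\,\cdot\,)\mathcal{B}$ commute and simply add on the velocity component, the net effect is $(v_0,\dot v_0+(\psi_0-\sum_i\psi_i^2)v_0)=(v_0,\dot v_0+\psi v_0)$; the total time is a finite sum of arbitrarily small times, hence can be made $<T$.

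The part requiring care — and the main obstacle — is the propagation of errors through this finite concatenation of inexact steps, made more delicate by the fact that the kick amplitude $\tau^{-1/2}$ blows up as $\tau\to0$. The parameters must be fixed in the right order: given the target error $\varepsilon$, one first chooses $\tau$ (from \eqref{eq:limit2}, resp. \eqref{eq:limit1} in the base case) so that the \emph{exact} conjugated/constant dynamics is within $\varepsilon/2$ of the desired map; this freezes $\tau^{-1/2}$, hence the (large but finite) size of the multipliers $\pm\tau^{-1/2}\psi_i$ and the true intermediate states $e^{\tau\mathcal{A}}e^{\tau^{-1/2}\psi_i\mathcal{B}}(v_0,\dot v_0)$. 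Only then does one invoke $(\star_{j-1})$ at each true intermediate state and the estimate \eqref{eq:continuity} — whose constant depends only on the now fixed, finite $L^1$-norms of the inner controls and on the time — to choose the inner approximation errors small enough that the accumulated error stays below $\varepsilon/2$. With this ordering the bookkeeping closes; the algebraic facts ($\mathcal{B}^2=0$, commutation of the maps $I+(\,\cdot\,)\mathcal{B}$, and the defining decomposition of $\mathcal{H}_j$) make everything else routine.
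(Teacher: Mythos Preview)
Your proposal is correct and follows essentially the same route as the paper: reduce to $\phi\in\mathcal{H}_\infty$ via Lemma~\ref{lem:density}, then prove by induction on the saturation level using \eqref{eq:limit1} for the base case and \eqref{eq:limit2} with the inductive hypothesis (to approximate the kicks $e^{\pm\tau^{-1/2}\psi_i\mathcal{B}}$) for the inductive step, with the same ordering of parameters (fix the conjugation time first, then the inner errors) and the same use of \eqref{eq:continuity} to propagate errors. The only cosmetic difference is that the paper applies the $-\phi_i^2$ moves before the $\phi_0$ move, whereas you do $\psi_0$ first; as you note, the maps commute, so this is immaterial.
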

Leu us show how Proposition \ref{prop:main-tool} follows from Lemma \ref{lemmino}

\begin{proof}[Proof of Proposition \ref{prop:main-tool}]
Let $f$ be the final velocity in the statement, and define
\begin{equation}\label{eq:auxiliary-function}
\phi_\varepsilon=\frac{f-\dot{w}_0}{w_0}\chi_{\mathbb{T}^d\setminus Z_\varepsilon(w_0)}\in L^2(\mathbb{T}^d),
\end{equation}
where
$$ Z_\varepsilon(w_0):=\{x\in\mathbb{T}^d\mid {\rm dist}(x,Z(w_0))<\varepsilon\},\quad Z(w_0):=\{z\in\mathbb{T}^d\mid w_0(z)=0\},  $$
and $\chi_{S}$ is the characteristic function of any subset $S\subset \mathbb{T}^d$. Then one has
$$\|(\dot{w}_0+w_0\phi_\varepsilon)-f\|_{L^2(\mathbb{T}^d)}\leq \|\dot{w}_0\|_{L^2(Z_\varepsilon(w_0))}+\|f\|_{L^2(Z_\varepsilon(w_0))}\to 0, \quad \text{as }\varepsilon\to 0, $$
thanks to the fact that, since by hypothesis (cf. \eqref{eq:initial-condition1}) $w_0$ is supported only on a finite set of Fourier modes, the Lebesgue measure of $Z_\varepsilon(w_0)$ tends to zero when $\varepsilon$ tends to zero. Hence, Proposition \ref{prop:main-tool} follows from Lemma \ref{lemmino} by choosing $\phi=\phi_\varepsilon$ with $\varepsilon$ small enough. 
\end{proof}
Before proving Lemma \ref{lemmino}, let us recall that the concatenation $q*p$ of two scalar control laws $p:[0,T_1]\to \mathbb{R},q:[0,T_2]\to \mathbb{R} $ is the scalar control law defined on $[0,T_1+T_2]$ as follows
$$(q*p)(t)=\begin{cases}
p(t), & t\in[0,T_1]\\
q(t-T_1), & t\in(T_1,T_1+T_2],
\end{cases} $$
and the definition extends to controls with values in $\mathbb{R}^{2d+1}$ componentwise. Denoting with $\mathcal{R}(t,W_0,p)$ the solution $(w(t),\frac{\partial}{\partial t}w(t))$ of \eqref{eq:bilinear} at time $t$, associated with a control $p$ and the initial condition $W_0=(w_0,\dot{w}_0)$, we will often use the fact that
$$\mathcal{R}(T_1+t,W_0,q*p)=\mathcal{R}(t,\mathcal{R}(T_1,W_0,p),q),\quad t>0. $$
\begin{proof}[Proof of Lemma \ref{lemmino}]
Let us start by assuming that the following property holds for any $n\in\mathbb{N}$:
\begin{itemize}
\item[($P_n$)] for any $(w_0,\dot{w}_0)\in H^1\times L^2(\mathbb{T}^d)$, $\phi\in\mathcal{H}_n$, and any $\varepsilon,T>0$, there exist $\tau\in[0,T)$ and $p:[0,\tau]\to \mathbb{R}^{2d+1}$ piecewise constant such that the solution $w$ of \eqref{eq:wave} associated with the control \eqref{eq:low-modes} and with the initial condition $(w_0,\dot{w}_0)$ satisfies
 $$\left\|\begin{pmatrix}w(\cdot,\tau)\\ \frac{\partial}{\partial t}w(\cdot,\tau)\end{pmatrix}-e^{\phi\mathcal{B}}\begin{pmatrix}w_0\\ \dot{w}_0\end{pmatrix}\right\|_{H^1\times L^2(\mathbb{T}^d)}< \varepsilon. $$
\end{itemize}
Noticing that, thanks to the fact that $\mathcal{B}^2=0$, one has
$$e^{\phi\mathcal{B}}\begin{pmatrix}w_0\\ \dot{w}_0\end{pmatrix}=\begin{pmatrix}w_0\\ \dot{w}_0+w_0\phi\end{pmatrix},$$
the property $(P_n)$ combined with the density property proved in Lemma \ref{lem:density} implies at once Lemma \ref{lemmino}.

We are thus left to prove the property $(P_n)$. An analogous property appeared in \cite[Theorem 2.2]{duca-nersesyan} in the study of nonlinear Schr\"odinger equations with bilinear control. We furnish the proof also here for completeness; we do it by induction on $n$. \\

\textbf{Basis of induction: $n=0$}\\
If $\phi\in\mathcal{H}_0$, there exists $(p_0,\dots,p_{2d})\in\mathbb{R}^{2d+1}$ such that $\phi(x)=\sum_{i=0}^{2d}p_j\mu_j(x) $. Consider then the solution of \eqref{eq:bilinear} associated with the constant control $p^\tau:=(p_0,\dots,p_{2d})/\tau\in\mathbb{R}^{2d+1}$ and with the initial condition $(w_0,\dot{w}_0)$, that is,
$$
\mathcal{R}(t,\begin{pmatrix}
w_0\\
\dot{w}_0
\end{pmatrix},p^\tau)=\exp\left(t \left(\mathcal{A}+\frac{\phi}{\tau}\mathcal{B}\right)\right)\begin{pmatrix}
w_0\\
\dot{w}_0
\end{pmatrix}.$$
Applying \eqref{eq:limit1} with $\xi=\phi$, we find $\tau\in[0,T)$ such that
$$\left\|\mathcal{R}(\tau,\begin{pmatrix}
w_0\\
\dot{w}_0
\end{pmatrix},p^\tau)-e^{\phi\mathcal{B}}\begin{pmatrix}
w_0\\
\dot{w}_0
\end{pmatrix}\right\|_{H^1\times L^2(\mathbb{T}^d)}<\varepsilon, $$ 
which proves the desired property.\\

\textbf{Inductive step: $n\Rightarrow n+1$}\\
Assuming that $(P_n)$ holds, we prove $(P_{n+1})$. If $\phi\in\mathcal{H}_{n+1}$, there exist $N\in \mathbb{N}$ and $\phi_0,\dots,\phi_N\in\mathcal{H}_n$ such that $\phi=\phi_0-\sum_{i=1}^N \phi_i^2$. Consider, e.g., $\phi_1$: thanks to \eqref{eq:limit2}, we can fix $\gamma\in[0,T/3)$ small enough such that
$$\left\|e^{-\gamma^{-1/2}\phi_1\mathcal{B}}e^{\gamma\mathcal{A}}e^{\gamma^{-1/2}\phi_1\mathcal{B}}\begin{pmatrix}
w_0\\
\dot{w}_0
\end{pmatrix}-e^{-\phi_1^2\mathcal{B}}\begin{pmatrix}
w_0\\
\dot{w}_0
\end{pmatrix}\right\|_{H^1\times L^2(\mathbb{T}^d)}< \varepsilon/2.  $$

Thanks to the inductive hypothesis, for any $\epsilon,T,\gamma>0$ there exist $\delta\in[0,T/3)$ and a piecewise constant control $p^{\delta,\gamma}:[0,\delta]\to \mathbb{R}^{2d+1}$ such that 
\begin{equation}\label{eq:first-impulsion}
 \left\|
\mathcal{R}(\delta,\begin{pmatrix}
w_0\\
\dot{w}_0
\end{pmatrix},p^{\delta,\gamma})-e^{\gamma^{-1/2}\phi_1\mathcal{B}}\begin{pmatrix}
w_0\\
\dot{w}_0
\end{pmatrix}\right\|_{H^1\times L^2(\mathbb{T}^d)}< \epsilon. 
\end{equation}
Consider now a zero control $0|_{[0,\gamma]}=(0,\dots,0):[0,\gamma]\to\mathbb{R}^{2d+1}$ (that is, a free evolution), applied on a time interval of size $\gamma$: thanks to \eqref{eq:first-impulsion} and the fact that, for any $t\in\mathbb{R}$, $e^{t\mathcal{A}}$ is bounded, there exists $C=C(\gamma)$ such that
\begin{align*}
& \left\|
\mathcal{R}(\delta+\gamma,\begin{pmatrix}
w_0\\
\dot{w}_0
\end{pmatrix},0|_{[0,\gamma]}*p^{\delta,\gamma})-e^{\gamma\mathcal{A}}e^{\gamma^{-1/2}\phi_1\mathcal{B}}\begin{pmatrix}
w_0\\
\dot{w}_0
\end{pmatrix}\right\|_{H^1\times L^2(\mathbb{T}^d)}\\
=&\left\|
e^{\gamma\mathcal{A}}\mathcal{R}(\delta,\begin{pmatrix}
w_0\\
\dot{w}_0
\end{pmatrix},p^{\delta,\gamma})-e^{\gamma\mathcal{A}}e^{\gamma^{-1/2}\phi_1\mathcal{B}}\begin{pmatrix}
w_0\\
\dot{w}_0
\end{pmatrix}\right\|_{H^1\times L^2(\mathbb{T}^d)}< C\epsilon.  
\end{align*}
Now, we use again the inductive hypothesis to deduce that there exist $\delta'\in[0,T/3)$ and a piecewise constant control $p^{\delta',\gamma}:[0,\delta']\to \mathbb{R}^{2d+1}$ such that
$$\left\|\mathcal{R}(\delta',e^{\gamma\mathcal{A}}e^{\gamma^{-1/2}\phi_1\mathcal{B}}\begin{pmatrix}
w_0\\
\dot{w}_0
\end{pmatrix},p^{\delta',\gamma})-e^{-\gamma^{-1/2}\phi_1\mathcal{B}}e^{\gamma\mathcal{A}}e^{\gamma^{-1/2}\phi_1\mathcal{B}}\begin{pmatrix}
w_0\\
\dot{w}_0
\end{pmatrix}\right\|_{H^1\times L^2(\mathbb{T}^d)}<\epsilon. $$
Then, thanks to \eqref{eq:continuity}, there exists $C'=C'(\|p^{\delta',\gamma}\|_{L^1},\delta')$ such that
\begin{align*}
& \left\|
\mathcal{R}(\delta+\gamma+\delta',\begin{pmatrix}
w_0\\
\dot{w}_0
\end{pmatrix},p^{\delta',\gamma}*0|_{[0,\gamma]}*p^{\delta,\gamma})-e^{-\phi^2_1\mathcal{B}}\begin{pmatrix}
w_0\\
\dot{w}_0
\end{pmatrix}\right\|_{H^1\times L^2(\mathbb{T}^d)}\\
\leq & \left\| \mathcal{R}(\delta',\mathcal{R}(\delta+\gamma,\begin{pmatrix}
w_0\\
\dot{w}_0
\end{pmatrix},0|_{[0,\gamma]}*p^{\delta,\gamma}),p^{\delta',\gamma})-\mathcal{R}(\delta',e^{\gamma\mathcal{A}}e^{\gamma^{-1/2}\phi_1\mathcal{B}}\begin{pmatrix}
w_0\\
\dot{w}_0
\end{pmatrix},p^{\delta',\gamma}) \right\|_{H^1\times L^2(\mathbb{T}^d)}\\
+&\left\|\mathcal{R}(\delta',e^{\gamma\mathcal{A}}e^{\gamma^{-1/2}\phi_1\mathcal{B}}\begin{pmatrix}
w_0\\
\dot{w}_0
\end{pmatrix},p^{\delta',\gamma})-e^{-\gamma^{-1/2}\phi_1\mathcal{B}}e^{\gamma\mathcal{A}}e^{\gamma^{-1/2}\phi_1\mathcal{B}}\begin{pmatrix}
w_0\\
\dot{w}_0
\end{pmatrix} \right\|_{H^1\times L^2(\mathbb{T}^d)}\\
+&\left\|e^{-\gamma^{-1/2}\phi_1\mathcal{B}}e^{\gamma\mathcal{A}}e^{\gamma^{-1/2}\phi_1\mathcal{B}}\begin{pmatrix}
w_0\\
\dot{w}_0
\end{pmatrix}-e^{-\phi^2_1\mathcal{B}}\begin{pmatrix}
w_0\\
\dot{w}_0
\end{pmatrix} \right\|_{H^1\times L^2(\mathbb{T}^d)}\\
\leq & C'C\epsilon+\epsilon+\varepsilon/2.
\end{align*}

Choosing $\epsilon>0$ small enough such that $C'C\epsilon+\epsilon<\varepsilon/2$, we have then proved that the piecewise constant control $p^{\delta',\gamma}*0|_{[0,\gamma]}*p^{\delta,\gamma}$ steers the initial state $(w_0,\dot{w}_0)$ $\varepsilon$-close to the state 
$$\begin{pmatrix}
w_0\\
\dot{w}_0-\phi_1^2 w_0
\end{pmatrix}=e^{-\phi_1^2\mathcal{B}}\begin{pmatrix}
w_0\\
\dot{w}_0
\end{pmatrix}$$ 
in time $\tau:=\delta'+\gamma+\delta<T$. We can now repeat the same argument w.r.t. $\phi_2$ (arguing as if we were starting from the initial state $(w_0,\dot{w}_0-\phi_1^2w_0)$) and prove that the system can be steered arbitrarily close to the state $(w_0,\dot{w}_0-\phi_1^2w_0-\phi_2^2w_0)$ in arbitrarily small times, and iteratively to $(w_0,\dot{w}_0-\sum_{i=1}^N\phi_i^2w_0)$. To conclude, by inductive hypothesis, there exists a piecewise contant control $p$ steering the state $(w_0,\dot{w}_0-\sum_{i=1}^N\phi_i^2w_0)$ arbitrarily close to the state
$$e^{\phi_0\mathcal{B}}\begin{pmatrix}w_0\\ \dot{w}_0-\sum_{i=1}^N\phi_i^2w_0 \end{pmatrix}=e^{(\phi_0-\sum_{i=1}^N\phi_i^2)\mathcal{B}}\begin{pmatrix}w_0\\ \dot{w}_0\end{pmatrix}=e^{\phi\mathcal{B}} \begin{pmatrix}w_0\\ \dot{w}_0\end{pmatrix}$$
in arbitrarily small times, which ends the proof of the property $(P_n)$.
\end{proof}

\section{Small-time global approximate controllability}\label{sec:proof}
In this section, we start by proving the small-time version of Theorem \ref{thm:main-result}.
\begin{theorem}\label{thm:small-time}
Consider an initial state $(0,0)\neq(w_0,\dot{w}_0)\in H^1\times L^2(\mathbb{T}^d)$ such that \eqref{eq:initial-condition1}, or \eqref{eq:initial-condition2}, holds. Then, for any final state $(w_1,\dot{w}_1)\in H^1\times L^2(\mathbb{T}^d)$ and any error and time $\varepsilon,T>0$, there exist a smaller time $\tau\in[0,T)$ a piecewise constant control law $p:[0,\tau]\to \mathbb{R}^{2d+1}$ such that the solution $w$ of \eqref{eq:wave} associated with the control \eqref{eq:low-modes},\eqref{eq:cos-sin} and with the initial condition $\left(w(t=0),\frac{\partial}{\partial t}w(t=0)\right)=(w_0,\dot{w}_0)$ satisfies
 $$\left\|\left(w(\cdot,\tau),\frac{\partial}{\partial t}w(\cdot,\tau)\right)-(w_1,\dot{w}_1)\right\|_{H^1\times L^2(\mathbb{T}^d)}< \varepsilon. $$
\end{theorem}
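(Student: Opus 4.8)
The plan is to glue together the results already proved. Proposition~\ref{prop:main-tool} lets one reset the velocity to an arbitrary $L^2$ function without changing the profile, in arbitrarily small time; Proposition~\ref{lem:moment-problem} lets one change the profile into another finitely-supported profile by a short free evolution, at the cost of producing an uncontrolled velocity. So the strategy is a short sandwich: reset the velocity, let the system evolve freely for a little while to bring the profile where we want it, then reset the velocity again. I would first prove the theorem under assumption \eqref{eq:initial-condition1}, and then reduce \eqref{eq:initial-condition2} to it.

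Assume \eqref{eq:initial-condition1} and fix $\varepsilon,T>0$. Since trigonometric polynomials are dense in $H^1(\mathbb{T}^d)$, I would first pick a \emph{nonzero} trigonometric polynomial $\hat w_1$ with $\|\hat w_1-w_1\|_{H^1}<\varepsilon/4$ (if $w_1=0$, take $\hat w_1$ a small nonzero constant), so that the effective target is a finitely-supported profile. By Proposition~\ref{lem:moment-problem} applied to the profiles $w_0,\hat w_1$ with time horizon $T/3$, there are $\tau'\in[0,T/3)$ and $f\in L^2(\mathbb{T}^d)$ such that $e^{\tau'\mathcal{A}}(w_0,f)=(\hat w_1,g)$ for some $g\in L^2(\mathbb{T}^d)$. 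Then: (i) by Proposition~\ref{prop:main-tool} applied to $(w_0,\dot w_0)$ with final velocity $f$, there are $\tau_1\in[0,T/3)$ and a piecewise constant $p_1$ with $\mathcal{R}(\tau_1,(w_0,\dot w_0),p_1)$ within $\varepsilon_1$ of $(w_0,f)$, where $\varepsilon_1>0$ is fixed at the end; (ii) concatenating a free evolution of duration $\tau'$ and using that $e^{\tau'\mathcal{A}}$ is bounded with norm $C_1=C_1(\tau')$, the state $\mathcal{R}(\tau_1+\tau',(w_0,\dot w_0),0|_{[0,\tau']}*p_1)$ is within $C_1\varepsilon_1$ of $e^{\tau'\mathcal{A}}(w_0,f)=(\hat w_1,g)$; (iii) since $(\hat w_1,g)$ again satisfies \eqref{eq:initial-condition1} (as $\hat w_1$ is a nonzero trigonometric polynomial), Proposition~\ref{prop:main-tool} applied to it with final velocity $\dot w_1$ gives $\tau_2\in[0,T/3)$ and a piecewise constant $p_2$ with $\mathcal{R}(\tau_2,(\hat w_1,g),p_2)$ within $\varepsilon/4$ of $(\hat w_1,\dot w_1)$.

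I would then combine the three stages via the flow identity $\mathcal{R}(\tau_1+\tau'+\tau_2,(w_0,\dot w_0),p_2*0|_{[0,\tau']}*p_1)=\mathcal{R}(\tau_2,\mathcal{R}(\tau_1+\tau',(w_0,\dot w_0),0|_{[0,\tau']}*p_1),p_2)$ and the continuity estimate \eqref{eq:continuity}, with constant $C_2=C_2(\|p_2\|_{L^1},\tau_2)$: the final state lands within $C_2C_1\varepsilon_1+\varepsilon/4$ of $(\hat w_1,\dot w_1)$, hence within $C_2C_1\varepsilon_1+\varepsilon/2$ of $(w_1,\dot w_1)$. Since $C_1$ depends only on $\tau'$ and $C_2$ only on $p_2$, both already fixed before $\varepsilon_1$, it now suffices to take $\varepsilon_1<\varepsilon/(2C_1C_2)$; the total error is $<\varepsilon$ and the total time is $\tau:=\tau_1+\tau'+\tau_2<T$. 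To handle \eqref{eq:initial-condition2} ($w_0=0$, $\dot w_0\neq0$ with finitely many modes), I would first let the system run freely for a short time $\tau_0\in(0,T/4)$; by the explicit formula \eqref{eq:free-dyn1}, $\tau_0$ can be chosen so that the resulting profile $w(\tau_0)$ is nonzero, and it is still supported on finitely many modes, so $(w(\tau_0),\tfrac{\partial}{\partial t}w(\tau_0))$ satisfies \eqref{eq:initial-condition1}; then apply the case just proved with horizon $3T/4$ and error $\varepsilon$, and prepend $0|_{[0,\tau_0]}$.

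I do not expect a genuinely hard step here, since the analytic content all sits in Propositions~\ref{lem:moment-problem} and \ref{prop:main-tool}; the points that need care are (a) replacing the arbitrary target $w_1$ by a nonzero trigonometric polynomial so that the moment-problem step applies, (b) checking that the intermediate state $(\hat w_1,g)$ fed into the second application of Proposition~\ref{prop:main-tool} still has nonzero, finitely-supported profile, and (c) ordering the choices so that the continuity constants $C_1,C_2$ are determined before the innermost accuracy $\varepsilon_1$.
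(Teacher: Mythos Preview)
Your proposal is correct and follows essentially the same three-step sandwich as the paper's proof: reduce to a nonzero finitely-supported target profile, then apply Proposition~\ref{prop:main-tool}, a short free evolution via Proposition~\ref{lem:moment-problem}, and Proposition~\ref{prop:main-tool} again, handling case \eqref{eq:initial-condition2} by a preliminary free evolution. Your explicit attention to the order of choices (fixing $C_1,C_2$ before $\varepsilon_1$) makes the error bookkeeping slightly cleaner than in the paper.
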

\begin{proof}
We first remark that:
\begin{itemize}
\item If the final profile $w_1$ is such that $w_1=0$, we consider $\widetilde{w}_1=\epsilon\ll1$: the approximate controllability towards $\widetilde{w}_1$ implies, thanks to the triangular inequality, the approximate controllability towards $w_1$.

\item Moreover, for any final profile $w_1$ and any $\epsilon>0$, we consider a finite set $\mathcal{K}_\epsilon\subset \mathbb{Z}^d$ such that
$$\left\|\sum_{k\in\mathcal{K}_\epsilon}\langle w_1,\varphi_k\rangle\varphi_k-w_1\right\|_{H^1(\mathbb{T}^d)}<\epsilon. $$
Then, thanks to the triangular inequality, the approximate controllability in $H^1\times L^2$ towards $(\sum_{k\in\mathcal{K}_\epsilon}\langle w_1,\varphi_k\rangle\varphi_k,\dot{w}_1)$ implies the approximate controllability in $H^1\times L^2$ towards $(w_1,\dot{w}_1)$ if $\epsilon$ is small enough.
\end{itemize}
 We are thus left to show Theorem \ref{thm:main-result} w.r.t. any final profiles $w_1\neq 0$ supported on a finite set of Fourier modes. 
We first show it under the assumption \eqref{eq:initial-condition1}. We do it by combining three steps, which can be outlined as
$$\begin{pmatrix}
w_0\\
\dot{w}_0
\end{pmatrix}\xrightarrow{(i)} \begin{pmatrix}
w_0\\
f
\end{pmatrix}\xrightarrow{(ii)} \begin{pmatrix}
w_1\\
g
\end{pmatrix} \xrightarrow{(iii)} \begin{pmatrix}
w_1\\
\dot{w}_1
\end{pmatrix}, $$
and roughly read as follows:
\begin{itemize}
\item[$(i)$] thanks to Proposition \ref{prop:main-tool}, we can change in small times the initial velocity $\dot{w}_0$ of the wave profile into an arbitrary velocity $f$, without changing the initial profile $w_0$;
\item[$(ii)$]thanks to Proposition \ref{lem:moment-problem}, we can choose the velocity $f$ in $(i)$ to be such that the associated free evolution sends the initial profile $w_0$ into the final profile $w_1$ in arbitrarily small times;
\item[$(iii)$] finally, we use again Proposition \ref{prop:main-tool} to change in arbitrarily small times the freely evolved velocity $g$ into the final velocity $\dot{w}_1$, without changing the final profile $w_1$. 
\end{itemize}



More precisely: let $f\in L^2(\mathbb{T}^d)$ be such that the solution to \eqref{eq:wave} with initial condition $(w_0,f)$ and identically zero control on a time interval of size $\gamma<T/3$ satisfies $w(\gamma)=w_1$ (the existence of such $f$ is guaranteed by Proposition \ref{lem:moment-problem}). Denote moreover with $g:=\frac{\partial}{\partial t} w(\gamma)$ the freely evolved velocity. Thanks to Proposition \ref{prop:main-tool}, for any $\epsilon,T>0$ there exists a time $\delta\in[0,T/3)$ and a piecewise constant control $p:[0,\delta]\to \mathbb{R}^{2d+1}$ such that
$$
 \left\|
\mathcal{R}(\delta,\begin{pmatrix}
w_0\\
\dot{w}_0
\end{pmatrix},p)-\begin{pmatrix}
w_0\\
f
\end{pmatrix}\right\|_{H^1\times L^2(\mathbb{T}^d)}<\epsilon.  
$$
Since $e^{t\mathcal{A}}$ is bounded for any $t$, there exists $C=C(\gamma)$ such that 
\begin{align*}
& \left\|
\mathcal{R}(\delta+\gamma,\begin{pmatrix}
w_0\\
\dot{w}_0
\end{pmatrix},0|_{[0,\gamma]}*p)-\begin{pmatrix}
w_1\\
g
\end{pmatrix}\right\|_{H^1\times L^2(\mathbb{T}^d)}\\
=&\left\|e^{\gamma\mathcal{A}}\mathcal{R}(\delta,\begin{pmatrix}
w_0\\
\dot{w}_0
\end{pmatrix},p)-e^{\gamma\mathcal{A}}\begin{pmatrix}
w_0\\
f
\end{pmatrix}\right\|_{H^1\times L^2(\mathbb{T}^d)}<C \epsilon.
\end{align*}
Now, thanks to Proposition \ref{prop:main-tool}, there exists a time $\delta'\in[0,T/3)$ and a piecewise constant control $p':[0,\delta']\to \mathbb{R}^{2d+1}$ such that 
$$
 \left\|
\mathcal{R}(\delta',\begin{pmatrix}
w_1\\
g
\end{pmatrix},p')-\begin{pmatrix}
w_1\\
\dot{w}_1
\end{pmatrix}\right\|_{H^1\times L^2(\mathbb{T}^d)}< \epsilon.  
$$
Then, thanks to \eqref{eq:continuity}, there exists $C'=C'(\|p'\|_{L^1},\delta')$ such that
\begin{align*}
& \left\|\mathcal{R}(\delta+\gamma+\delta',\begin{pmatrix}
w_0\\
\dot{w}_0
\end{pmatrix},p'*0|_{[0,\gamma]}*p)-\begin{pmatrix}
w_1\\
\dot{w}_1
\end{pmatrix}\right\|_{H^1\times L^2(\mathbb{T}^d)} \\
\leq  & \left\|\mathcal{R}(\delta',\mathcal{R}(\delta+\gamma,\begin{pmatrix}
w_0\\
\dot{w}_0
\end{pmatrix},0|_{[0,\gamma]}*p),p')-\mathcal{R}(\delta',\begin{pmatrix}
w_1\\
g
\end{pmatrix},p')\right\|_{H^1\times L^2(\mathbb{T}^d)}\\
+& \left\|\mathcal{R}(\delta',\begin{pmatrix}
w_1\\
g
\end{pmatrix},p')-\begin{pmatrix}
w_1\\
\dot{w}_1
\end{pmatrix}\right\|_{H^1\times L^2(\mathbb{T}^d)}\\
\leq & C'C\epsilon+\epsilon
\end{align*}
Taking $\epsilon$ small enough such that $C'C\epsilon+\epsilon<\varepsilon$, we have found a piecewise constant control $p'*0|_{[0,\gamma]}*p$ steering $(w_0,\dot{w}_0)$ $\varepsilon$-close to $(w_1,\dot{w}_1)$ in $H^1\times L^2(\mathbb{T}^d)$, in time $\tau:=\delta+\gamma+\delta'<T$. This concludes the proof under assumption \eqref{eq:initial-condition1}.

To prove the theorem under assumption \eqref{eq:initial-condition2}, we first consider a free evolution for an arbitrarily small time $\gamma>0$: thanks to \eqref{eq:free-dyn1}, $w(\gamma)\neq 0$ and $\langle w(\gamma),e^{ikx}\rangle=0$ for all but a finite set of $k\in\mathbb{Z}^d$. Hence, we can now consider as initial profile $w(\gamma)$ satisfying assumption \eqref{eq:initial-condition1}, and the previous argument applies.
\end{proof}
We conclude by showing how Theorem \ref{thm:main-result} follows from Theorem \ref{thm:small-time}.
\begin{proof}[Proof of Theorem \ref{thm:main-result}]
Thanks to Theorem \ref{thm:small-time}, for every error $\epsilon>0$ there exist times $\tau,\tau'\in[0,T/2)$ and piecewise constant controls $q:[0,\tau]\to \mathbb{R}^{2d+1}, q':[0,\tau']\to \mathbb{R}^{2d+1}$ such that
$$\left\|\mathcal{R}(\tau,\begin{pmatrix}
w_0\\
\dot{w}_0
\end{pmatrix},q)-\begin{pmatrix}
1\\
0
\end{pmatrix}\right\|<\epsilon,\quad \left\|\mathcal{R}(\tau',\begin{pmatrix}
1\\
0
\end{pmatrix},q')-\begin{pmatrix}
w_1\\
\dot{w}_1
\end{pmatrix}\right\|<\epsilon. $$

We now use the fact that $(w(t),\frac{\partial}{\partial t}w(t))\equiv(1,0)$ is a solution of \eqref{eq:bilinear} w.r.t. the control $0|_{[0,T-(\tau+\tau')]}$ (that is, the zero control applied on a time interval of size $T-(\tau+\tau')$), which gives
\begin{align*}
& \left\|\mathcal{R}(T,\begin{pmatrix}
w_0\\
\dot{w}_0
\end{pmatrix},q'*0|_{[0,T-(\tau+\tau')]}*q)-\begin{pmatrix}
w_1\\
\dot{w}_1
\end{pmatrix}\right\|_{H^1\times L^2(\mathbb{T}^d)} \\
\leq  & \left\|\mathcal{R}(\tau',\mathcal{R}(T-\tau',\begin{pmatrix}
w_0\\
\dot{w}_0
\end{pmatrix},0|_{[0,T-(\tau+\tau')]}*q),q')-\mathcal{R}(\tau',\begin{pmatrix}
1\\
0
\end{pmatrix},q')\right\|_{H^1\times L^2(\mathbb{T}^d)}\\
+& \left\|\mathcal{R}(\tau',\begin{pmatrix}
1\\
0
\end{pmatrix},q')-\begin{pmatrix}
w_1\\
\dot{w}_1
\end{pmatrix}\right\|_{H^1\times L^2(\mathbb{T}^d)}\\
\leq &C'\!\left\|\mathcal{R}(T-(\tau+\tau'),\mathcal{R}(\tau,\begin{pmatrix}
w_0\\
\dot{w}_0
\end{pmatrix},q),0|_{[0,T-(\tau+\tau')]})\!-\!\mathcal{R}(T-(\tau+\tau'),\begin{pmatrix}
1\\
0
\end{pmatrix},0|_{[0,T-(\tau+\tau')]})\right\|+\epsilon \\ 
\leq & C'C\epsilon+\epsilon,
\end{align*}
where the existence of $C'=C'(\|q'\|_{L^1},\tau')$ is given by \eqref{eq:continuity} and the existence of $C=C(T-(\tau+\tau'))$ follows from the boundedness of $e^{t\mathcal{A}}$ for any fixed $t$. Taking $\epsilon$ small enough such that $C'C\epsilon+\epsilon<\varepsilon$, we have found a piecewise constant control $q'*0|_{[0,T-(\tau+\tau')]}*q$ steering $(w_0,\dot{w}_0)$ $\varepsilon$-close to $(w_1,\dot{w}_1)$ in $H^1\times L^2(\mathbb{T}^d)$, in time $T$. This concludes the proof.
\end{proof}

\textbf{Acknowledgements}

The author is thankful to Thomas Chambrion, Sylvain Ervedoza, Vahagn Nersesyan, Mario Sigalotti, and Marius Tucsnak for helpful discussions.

This work has been supported by the STARS Consolidator Grant 2021 “NewSRG” of the University of Padova, and by the PNRR MUR project PE0000023-NQSTI.

\bibliographystyle{spmpsci}
\bibliography{references}

\begin{thebibliography}{10}
\providecommand{\url}[1]{{#1}}
\providecommand{\urlprefix}{URL }
\expandafter\ifx\csname urlstyle\endcsname\relax
  \providecommand{\doi}[1]{DOI~\discretionary{}{}{}#1}\else
  \providecommand{\doi}{DOI~\discretionary{}{}{}\begingroup
  \urlstyle{rm}\Url}\fi

\bibitem{navier-stokes}
Agrachev, A., Sarychev, A.: {N}avier-{S}tokes equations: Controllability by
  means of low modes forcing.
\newblock J. Math. Fluid Mech. \textbf{7}, 108--152 (2005)

\bibitem{agrachev2}
Agrachev, A.A., Sarychev, A.V.: Controllability of 2{D} {E}uler and
  {N}avier-{S}tokes equations by degenerate forcing.
\newblock Comm. Math. Phys. \textbf{265}(3), 673--697 (2006)

\bibitem{BMS}
Ball, J.M., Marsden, J.E., Slemrod, M.: Controllability for distributed
  bilinear systems.
\newblock SIAM J. Control Optim. \textbf{20}(4), 575--597 (1982)

\bibitem{beauchard-wave}
Beauchard, K.: Local controllability and non-controllability for a 1d wave
  equation with bilinear control.
\newblock Journal of Differential Equations \textbf{250}(4), 2064--2098 (2011)

\bibitem{minimal-time-coron}
Beauchard, K., Coron, J.M., Teismann, H.: Minimal time for the bilinear control
  of {S}chrödinger equations.
\newblock Systems \& Control Letters \textbf{71}, 1--6 (2014)

\bibitem{minimal-time-approximate}
Beauchard, K., Coron, J.M., Teismann, H.: Minimal time for the approximate
  bilinear control of {S}chrödinger equations.
\newblock Mathematical Methods in the Applied Sciences \textbf{41} (2018)

\bibitem{laurent}
Beauchard, K., Laurent, C.: Local controllability of 1{D} linear and nonlinear
  {S}chr\"{o}dinger equations with bilinear control.
\newblock J. Math. Pures Appl. (9) \textbf{94}(5), 520--554 (2010)

\bibitem{Chambrion-Caponigro-Boussaid-2020}
Boussa\"{\i}d, N., Caponigro, M., Chambrion, T.: Regular propagators of
  bilinear quantum systems.
\newblock J. Funct. Anal. \textbf{278}(6), 108412, 66 (2020)

\bibitem{minimal-time-thomas}
Boussaïd, N., Caponigro, M., Chambrion, T.: Small time reachable set of
  bilinear quantum systems.
\newblock In: 2012 IEEE 51st IEEE Conference on Decision and Control (CDC), pp.
  1083--1087 (2012)

\bibitem{urbani-wave}
Cannarsa, P., Martinez, P., Urbani, C.: Bilinear control of a degenerate
  hyperbolic equation (2021 (arXiv:2112.00636))

\bibitem{small-time-molecule}
Chambrion, T., Pozzoli, E.: Small-time bilinear control of {S}chrödinger
  equations with application to rotating linear molecules.
\newblock Automatica \textbf{153}, 111028 (2023)

\bibitem{chambrion-laurent}
Chambrion, T., Thomann, L.: A topological obstruction to the controllability of
  nonlinear wave equations with bilinear control term.
\newblock SIAM Journal on Control and Optimization \textbf{57}(4), 2315--2327
  (2019)

\bibitem{coron-small-semiclassical}
Coron, J.M., Xiang, S., Zhang, P.: On the global approximate controllability in
  small time of semiclassical 1-d {S}chrödinger equations between two states
  with positive quantum densities.
\newblock Journal of Differential Equations \textbf{345}, 1--44 (2023)

\bibitem{duca-nersesyan}
Duca, A., Nersesyan, V.: Bilinear control and growth of {S}obolev norms for the
  nonlinear {S}chrödinger equation.
\newblock Journal of the European Mathematical Society  (2023).
\newblock In press

\bibitem{duca-nersesyan2}
Duca, A., Nersesyan, V.: Local exact controllability of the 1{D} nonlinear
  {S}chrödinger equation in the case of {D}irichlet boundary conditions
  (arXiv:2202.08723 (2022))

\bibitem{duca-io-urbani}
Duca, A., Pozzoli, E., Urbani, C.: Small-time bilinear control of nonlinear
  heat equations (In preparation)

\bibitem{khapalov-wave2}
Khapalov, A.: Controllability properties of a vibrating string with variable
  axial load only.
\newblock Discrete and Continuous Dynamical Systems \textbf{11} (2004)

\bibitem{khapalov-wave1}
Khapalov, A.Y.: Reachability of nonnegative equilibrium states for the
  semilinear vibrating string by varying its axial load and the gain of
  damping.
\newblock ESAIM: Control, Optimisation and Calculus of Variations
  \textbf{12}(2), 231--252 (2006)

\bibitem{Lobry}
Lobry, C.: Contrôlabilité des systèmes non linéaires.
\newblock SIAM Journal on Control \textbf{8}(4), 573--605 (1970)

\bibitem{nersesyan-parabolic}
Nersesyan, V.: Approximate controllability of nonlinear parabolic pdes in
  arbitrary space dimension.
\newblock Mathematical Control \& Related Fields \textbf{11} (2019)

\bibitem{Shirikyan1}
Shirikyan, A.: Approximate controllability of three-dimensional
  {N}avier-{S}tokes equations.
\newblock Comm. Math. Phys. \textbf{266}(1), 123--151 (2006)

\bibitem{Shirikyan2}
Shirikyan, A.: Contr\^{o}labilit\'{e} exacte en projections pour les
  \'{e}quations de {N}avier-{S}tokes tridimensionnelles.
\newblock Ann. Inst. H. Poincar\'{e} C Anal. Non Lin\'{e}aire \textbf{24}(4),
  521--537 (2007)

\bibitem{sussmann-jurje}
Sussmann, H.J., Jurdjevic, V.: Controllability of nonlinear systems.
\newblock Journal of Differential Equations \textbf{12}(1), 95--116 (1972)

\end{thebibliography}

 \end{document}